\newcommand{\leb}[1]{\text{Leb} \left( #1 \right)}
\newtheorem{theorem}{Theorem}[section]
\newtheorem{lemma}[theorem]{Lemma}
\newtheorem{proposition}[theorem]{Proposition}
\newtheorem{conjecture}[theorem]{Conjecture}
\newtheorem{definition}[theorem]{Definition}
\title{Brownian Bees with Drift:\\[1ex]     
        Finding the Criticality}   
\begin{document}


\setcounter{secnumdepth}{3}
\setcounter{tocdepth}{3}

\maketitle 

\begin{center}
     {\large \textbf{Abstract}}
\end{center}

{\large
This dissertation examines the impact of a drift $\mu$ on Brownian Bees, which is a type of branching Brownian motion that retains only the $N$ closest particles to the origin. The selection effect in the $0$-drift system ensures that it remains recurrent and close to the origin. The study presents two novel findings that establish a threshold for $\mu$: below this value, the system remains recurrent, and above it, the system becomes transient. Moreover, the paper proves convergence to a unique invariant distribution for the small drift case. The research also explores N-BBM, a variant of branching Brownian motion where the $N$ leftmost particles are retained, and presents one new result and further discussion on this topic.}          

\begin{romanpages}          
\tableofcontents            

\end{romanpages}            

\chapter{Introduction}

Branching Brownian motion (BBM) is a stochastic process involving independent particles moving according to Brownian motion. Initially there is a single particle following a Brownian trajectory, until after an independent exponential time of rate 1, it branches into two new Brownian particles, each of which has its own exponential branching clock of rate 1. The study of BBM in its most basic form dates back to 1975; introduced by McKean \cite{McKean_FKPP}.

As is common with many mathematical models, after the introduction of BBM different authors considered making small, physically motivated changes to the construction and considering what effects this has on its properties. One such alteration is the "N-BBM"; a model to which a large part of this dissertation is focused on. In this model we start with $N$ particles performing branching Brownian motion in one dimension, and whenever a particle branches, such that there are $N+1$ particles, we instantaneously remove the particle furthest to the left, i.e. the particle whose position is less than any other particles (and with arbitrary choice in a tie).

Another alteration is the "Brownian Bees" process\footnote{The name "Brownian Bees" was initially suggested by Jeremy Quastel \cite{swarm_limit} and it due to a rough analogy of the Brownian motions looking like a "swarm of bees".}. Evolving similarly to N-BBM, such that whenever we have more than $N$ particles one is removed - however this time we remove the particle furthest away from the origin. This process is a more well behaved object than the N-BBM due to the compacting effect of the selection rule, leading to the existence of limits for the cases $N \rightarrow \infty$ and $t \rightarrow \infty$. A pair of papers by J. Berestycki et al established a nice result of how in a certain sense these limits commute. \cite{free_boundary_problem, swarm_limit}.

In this dissertation we consider the effect of adding a drift $\mu$ to the Brownian Bee process, so that each particle evolves according to a Brownian motion with drift. The results of this paper can then be summarised as: There is a critical value $v_N$ such that if $|\mu| > v_N$ then the system is transient, and if $|\mu| < v_N$ the system is recurrent, and furthermore this $v_N$ is the asymptotic speed of the N-BBM. The precise statement of these will be given by Theorems \ref{Brownian_Bees_supercritical} and \ref{subcritical_stationarity}.

We can construct a generalisation of the above processes (following \cite{brunet-derrida}\footnote{In this paper N.Berestycki and Zhao are exploring multidimensional Brunet-Derrida systems, however for this dissertation we shall stick primarily to the one dimensional process.}), where we describe the selection rule by a "score" function $s: \mathbb{R}^d \rightarrow \mathbb{R}$, and then we refer to the process as a "Brunet Derrida system with score function $s$". Furthermore we may add a drift $\mu \in \mathbb{R}^d$ to the Brownian motions driving the particles between branching times. The process $Y^{(N, \mu)} = \left(Y_1(t), \ldots Y_n(t)\right)$ is then defined informally by:
\begin{itemize}
    \item Each particle moves according to an independent Brownian motion with drift $\mu$.
    \item Every time an exponential clock of rate $N$ fires, the particle with the lowest value of $s(Y_i(t))$ jumps to the position of a randomly chosen particle $Y_k(t)$ where $k$ is chosen uniformly from $\{1, \ldots, N \}$
\end{itemize}

Note in this definition that a particle can "jump to itself", where in this case no change will occur.

Following this notation, the N-BBM is recovered by the function $s(x) = x$, whereas the Brownian Bee process is recovered by $s(x) = -|x|$.

These models have received some attention in the literature since the initial conception of a one-dimensional such model, by Brunet, Derrida, Mueller and Munier 2006/7 in the papers \cite{initial_brunet_derrida_1, initial_brunet_derrida_2}.

\begin{section}{Motivation}
Some physical motivation for the study of Brunet-Derrida systems can be provided by the study of evolution \cite{model_ancestry, model_genealogy}
\end{section}. Broadly speaking, the Brownian particles represent the fitness of different members of an asexually reproducing species. The population is taken to be fixed at size $N$ due to external factors, and the members of the species that survive are those with the highest fitness function $s$. The Brownian motion aspect then represents some white noise caused by mutations upon reproduction. The different functions of $s$ then allow for modelling of different types of evolutionary behaviour.

We can give additional motivation for the model of Brownian Motion with drift, although it should be prefaced with the fact that this author is not a biologist. In the standard Brownian Bees model, it could be viewed that the origin represents the "optimum fitness" for the environment that the species is in - giving the interpretation that the convergence in $t$ of the Brownian Bee model represents the species eventually adapting to the environment they are in (\cite{swarm_limit} Theorem 1.2). Adding in a drift to the Brownian motion can be viewed as moving the origin to position $\mu t$ at time $t$, this could represent this "optimum fitness" changing over time, for example due to climate change or other external factors. It is then useful to study whether the population is able to adapt fast enough to keep up with these external changes.

\begin{section}{Notation and Formal Construction}

In this section we will formally construct processes in enough generality to cover N-BBM, Brownian Bees and Brownian Bees with drift. We shall refer to such systems as "Brunet-Derrida systems with drift $\mu$"

We will construct a process with a drift $\mu \in \mathbb{R}^d$ and a "score function" $s: \mathbb{R}^d \rightarrow \mathbb{R}$, where the score function determines which particle is removed at branching times. For example for standard N-BBM in one dimension, we take $\mu = 0$, and $s(x) = x$.

We shall make the convention that in $\mathbb{R}^d$, $d>1$ we label the particles such that
$$s(Y_1(t)) \leq \ldots \leq s(Y_N(t))$$
however in $\mathbb{R}$ it is more convenient to use the ordering 
$$Y_1(t) \leq \ldots \leq Y_N(t)$$

We generally label the process by $Y^{(N, \mu)}$ (where the score function used will be clear from context). Furthermore, we write $Y^{(N, \mu)} = (Y_1, \ldots Y_N)$, suppressing the dependence on $N$ and $\mu$, which again will be clear from context.

Now for the formal construction, we adapt from \cite{brunet-derrida}:

Let $\left(J_i\right)_{i \geq 0}$ be the jump times of a Poisson process with rate $N$ with $J_0=0$, and let $\left(K_i\right)_{i \geq 1}$ be an independent sequence of i.i.d. uniform random variables on $\{1, \ldots, N\}$. The process is started in some given initial condition ($Y_1(0), \ldots Y_N(0)$). Then inductively, for each $i \geq 1$, assuming that the system is defined up to time $J_{i-1}$ with $Y_1\left(J_{i-1}\right) \geq \cdots \geq Y_N\left(J_{i-1}\right)$, we define

$$ Y_n(t)=Y_n\left(J_{i-1}\right)+B^n(t) + \mu t, \quad t \in\left[J_{i-1}, J_i\right)$$

, where $\left(B^n(t), t \geq 0\right)_{n = 1}^N$ are independent Brownian motions in $\mathbb{R}$, independent from $\left(K_i\right)$ and $\left(J_i\right)$. At time $J_i$, we duplicate particle $Y_{K_i}\left(J_i^{-}\right)$ and remove the particle $\min _{1 \leq n \leq N} s\left(Y_n\left(J_i^{-}\right)\right)$. Note that if the duplicated particle is the particle with minimum score, the net effect is that nothing happens. We now relabel the particles over this interval in so that they are increasing in $\mathbb{R}$ (or in the case $\mathbb{R}^d$, $d>1$ we order such that they are increasing in their score function)
$$
Y_1(t) \leq \ldots \leq Y_N(t), \quad t \in\left[J_{i-1}, J_i\right]
$$

The process $Y^{(N, \mu)} = (Y_1(t), \ldots Y_N(t))$ is then a Brunet-Derrida system with score function $s$ and drift $\mu$.

We can now recover the processes mentioned at the beginning by taking:

\begin{itemize}
    \item $d=1$, $s(x) =x$ and $\mu = 0$ for standard N-BBM
    \item $s(x) = -||x||$, $\mu=\mu$ and $d=d$ for $d$-dimensional Brownian bees with drift $\mu$
\end{itemize}

Additionally, we introduce the concept of \emph{ancestors} and \emph{children}. The time $s$ ancestor of a particle $Y_n(t)$ for $s < t$ is a particle that $Y_n(t)$ is a direct descendent of through a duplication event. Additionally, the time $t$ children of a particle $Y_k(s)$ are all the particles that are directly descended from $Y_k(s)$ through duplication events.
We can see that each particle has precisely one ancestor at any given previous time, however a particle may have between $0$ and $N$ children at any later time.

We further remark that since the particles are ordered by position at any time $t$ it is \emph{not} necessarily the case that $Y_n(s)$ is an ancestor of $Y_n(t)$.

We will also refer to $\mathcal{F}_t$, the filtration generated by the whole process; so if $N_t := \# \{i \geq 1 : J_i \leq t \}$ is the counting process associated to the jump times $J_i$ then,

$$\mathcal{F}_t := \sigma \left( (B^1_s)_{s \leq t}, \ldots, (B^N_s)_{s \leq t}, (N_s)_{s \leq t}, (K_i)_{i \leq N_t}\right)$$

\end{section}

\chapter{N-BBM}
\begin{section}{Heuristics and Current Results}

One-dimensional Branching Bronwian motion with selection (N-BBM) first received attention, in Brunet, Derrida, Mueller and Munier's influential papers \cite{ initial_brunet_derrida_1, initial_brunet_derrida_2}. The original purpose of such a model was to study "noisy F-KPP equations" that is the \emph{Fisher–Kolmogorov–Petrovskii–Piskounov equation}
\begin{equation}
\label{F-KPP}
\frac{\partial}{\partial t} u = \frac{1}{2} \left(\frac{\partial}{\partial x} \right)^2 u + F(u)
\end{equation}

Where the term $F(u) = \beta (1 - u - f(1-u))$.

Since the models conception, however there have been several other applications of its study. For a more detailed discussion of these see the introduction of Pascal Malliads 2012 thesis \cite{BBM_with_selection}.

A quick relation of the N-BBM to the F-KPP equation is from the free branching Brownian motion process, that is a particle started at the origin and moving according to a Brownian motion. The particle branches to form new independent branching particles at rate 1, with no killing occurring.

It can then be shown that the probability $u(x, t)$, that there exists a particle to the right of $x$ at time $t$ satisfies Equation \ref{F-KPP} (In fact this is the very reason BBM was first introduced \cite{McKean_FKPP}).

The precise reason for the relevance of the N-BBM model is a little more subtle, and comes from the study of the "cut-off" equation introduced by Brunet and Derrida in \cite{brunet_derrida_original}, where the forcing term in the F-KPP equation \ref{F-KPP} is multiplied by $\mathbbm{1}_{u \geq N^{-1}}$, allowing for better modelling of real world phenomenon. For the purposes of this dissertation we simply assure the reader that N-BBM is a worthy object of study and direct them to \cite{BBM_with_selection} for more details on this.

Letting $X^{(N)} = (X_1(t), \ldots X_N(t))$ be a standard N-BBM; some key conjectures made by Brunet and Derrida about this model in \cite{BD_Conjectures} are amongst others:

\begin{enumerate}
    \item For every $N$, $\lim_{t \rightarrow \infty} \frac{X_1(t)}{t} =  \lim_{t \rightarrow \infty} \frac{X_N(t)}{t} = v_N$
    as $N \rightarrow \infty$ 
    \item $v_N = \sqrt{2} - \frac{\pi^2}{\sqrt{2}(\log N)^2}\left(1 - \frac{6 \log \log N}{(\log(N)^3}(1 + o(1)) \right)$
    \item $\text{diam}_t := X_N(t) - X_1(t) \approx  \frac{\log(N)}{\sqrt{2}}$
    \item The genealogical timescale of the population is $(\log N)^3$ and converges to the Bolthausen–Sznitman coalescent
\end{enumerate}

We won't delve into the $4^{\text{th}}$ point here too much, other than remarking that it is a very deep result and more discussion and definitions are given in \cite{genealogical_time_scale}. Broadly speaking, understanding the genealogical time scale allows us to understand how far back in time we need to go before we find a common ancestor of the whole living population. It is easy to see then why this is a point of interest, due to the study of N-BBM being partially motivated by population genetics. 

As for the other points on this list, A proof of (1) was first given for a very similar model by \cite{Brunet-Derrida_velocity} by Bérard and Gouéré, and this was adapted by N.Berestycki and Zhao for the case of N-BBM. The authors extended this result to $d$ dimensions (Theorem 1.1 in \cite{brunet-derrida}), however we state their result for the simplest case $d=1$ here:

\begin{theorem}
\label{speed_linear_score}
Let $N>1$ and let $(X_1, \ldots X_N)$ be an N-BBM process. Then
\begin{equation}
    \frac{|X_N(t) - X_1(t)|}{t} \rightarrow 0
\end{equation}
as $t \rightarrow \infty$ almost surely.

Moreover,
\begin{equation}
    \frac{X_1(t)}{t} \rightarrow v_N
\end{equation}
almost surely, where $v_N$ are a deterministic constants
\end{theorem}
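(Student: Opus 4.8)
The plan is to obtain the common speed $v_N$ through a subadditivity argument built on a comparison (monotone coupling) principle, and then to control the diameter separately. The first ingredient I would establish is the comparison principle: if two N-BBM systems start from ordered configurations $x_1 \le \cdots \le x_N$ and $y_1 \le \cdots \le y_N$ with $x_i \le y_i$ for every $i$, then, driving both with the same Poisson clock $(J_i)$, the same branching labels $(K_i)$, and the same Brownian increments, one can couple them so that $X_i(t) \le Y_i(t)$ for all $i$ and all $t \ge 0$. The only point needing checking is that the ordering survives a branching/removal event, and this holds because duplicating the $K_i$-th particle and deleting the leftmost particle are both monotone operations on ordered configurations.

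Combining this comparison principle with the translation invariance of the dynamics — shifting every particle by a constant $c$ commutes with the evolution — yields the speed via Kingman's subadditive ergodic theorem. Run the system from all $N$ particles at the origin; at time $s$ every particle lies at or above $X_1(s)$. Introduce an auxiliary system started at time $s$ with all $N$ particles at height $X_1(s)$, driven by the same noise increments on $[s,s+t]$. Comparison gives that the true configuration dominates the auxiliary one, so $X_1(s+t) \ge X_1(s) + \widehat X_1(t)$, where by translation invariance $\widehat X_1(t)$ is a functional of the noise on $[s,s+t]$ alone, hence independent of $\mathcal F_s$ and equal in law to $X_1(t)$. This is precisely the superadditivity Kingman requires; Gaussian tail bounds together with Poisson control on the number of branchings before time $t$ supply the integrability $\mathbb E|X_1(t)| < \infty$, so $X_1(t)/t \to v_N$ almost surely for a deterministic $v_N$. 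The mirror-image argument for the rightmost particle $X_N$, comparing against all particles placed at $X_N(s)$, is instead subadditive and gives $X_N(t)/t \to w_N$ almost surely with $v_N \le w_N$.

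It remains to show the two speeds coincide, equivalently that $\mathrm{diam}_t = X_N(t) - X_1(t) = o(t)$ almost surely, and this is the main obstacle. The heuristic is the compacting effect of selection: any particle straying too far behind is deleted, tying the cloud together, and I expect $\mathrm{diam}_t$ to be tight (bounded in probability, of order $\log N$), which is far stronger than the $o(t)$ actually needed. Since a duplicated particle is born exactly at its parent's location, each ancestral lineage is a spatially continuous path, Brownian between branch times, so $\mathrm{diam}_t$ is governed by the maximal spatial fluctuation of the surviving genealogical tree. The difficulty is that surviving lineages are \emph{not} free Brownian motions — selection biases them toward the bulk — so a naive union bound over lineages is unavailable. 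The route I would attempt is a regeneration argument: identify ``good'' times at which the cloud has small diameter and the future evolution is essentially independent of the past, show such times recur with positive density, and deduce tightness of the diameter between consecutive regenerations. Establishing the existence, recurrence, and requisite independence of these times is where essentially all the work lies; granting it, $\mathrm{diam}_t/t \to 0$ forces $w_N = v_N$, so that $X_N(t)/t \to v_N$ as well, completing the proof.
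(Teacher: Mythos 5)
First, for context: the dissertation does not prove this theorem itself; it quotes it from Berestycki--Zhao \cite{brunet-derrida} (their Theorem 1.1), whose proof adapts B\'erard--Gou\'er\'e \cite{Brunet-Derrida_velocity}. Your skeleton --- the monotone coupling (which is exactly Lemma \ref{N_coupling}), translation invariance, and Kingman's subadditive ergodic theorem applied to the recentred auxiliary system restarted from all particles at $X_1(s)$ --- is sound and is essentially the route taken in that literature: it correctly yields deterministic speeds $X_1(t)/t \to v_N$ and $X_N(t)/t \to w_N$ with $v_N \le w_N$.

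However, there is a genuine gap, and you flag it yourself: the first display of the theorem, $|X_N(t)-X_1(t)|/t \to 0$, is precisely the statement $w_N = v_N$, and your proposal only names a strategy (``regeneration'') for it without constructing the regeneration times or proving that they recur. This is not a routine detail; it is where the real content of the theorem lies, and as written your argument establishes only the ``Moreover'' half in a weakened form (two possibly distinct speeds). The missing idea that makes the regeneration argument work is that one can build an event whose conditional probability given the past is bounded below \emph{uniformly in the configuration}, because branching clocks and Brownian increments do not depend on particle positions. Concretely: over a unit time interval, force the rightmost particle to move up by $1$ while every other driving motion stays within $1/2$ (opening a gap), and then force that particle's family to branch $N$ times while all increments stay within some small $\epsilon/N$. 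The gap guarantees that at each of these branchings the killed (leftmost) particle is a non-descendant, so afterwards the entire population consists of descendants of one particle and has diameter $O(1)$; since clocks and increments are position-independent, this event has a configuration-independent positive probability. Consequently the times at which it occurs have gaps with geometric tails, and between consecutive such times the diameter is controlled by free-BBM displacement bounds, giving $\mathrm{diam}_t = O(\log t) = o(t)$ almost surely and hence $w_N = v_N$. This is exactly the type of event this dissertation constructs elsewhere (the events $U_k$ in the proofs of Theorem \ref{N-BBM_Hitting_Time} and Proposition \ref{hitting_time_finite}); without this or an equivalent construction your proof does not close, and the difficulty you correctly identify --- that surviving lineages are not free Brownian motions --- is not actually resolved by anything you wrote.
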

This result then tell us that for large times the N-BBM moves in a ballistic manor, and furthermore has a deterministic asymptotic velocity $v_N$.

Point (2) is harder to approach, though it has been partially settled by Bérard and Gouéré in the same paper. They showed that for a branching random walk with selection (a very similar but not quite identical process to N-BBM), the velocity of the system has a correction term of order $(\log N)^{-2} + o\left((\log N)^{-2} \right)$. This result should in theory be easily adaptable to the case of N-BBM, however due to the technical nature of their proof no rigorous adaption to N-BBM has been given. Getting the "third order" $\frac{6 \log \log N}{(\log(N)^3}(1 + o(1))$ correction term out is a harder and still open problem, with currently no known approaches for how to tackle this.

We state this conjecture for the second order correction to $v_N$ here:
\begin{conjecture}
\label{speed_conjecture}
The constants $v_N$ appearing in Theorem \ref{speed_linear_score} have asymptotic expansion in $N$ given by 
\begin{equation}
    v_N = \sqrt{2} - \frac{\pi^2}{\sqrt{2}( \log N)^2} + o\left( (\log N)^{-2}\right)
\end{equation}
\end{conjecture}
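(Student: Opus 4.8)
The plan is to establish the expansion by proving matching upper and lower bounds on $v_N$, each accurate to order $(\log N)^{-2}$, adapting the strategy Bérard and Gouéré developed for branching random walks and importing the sharp continuous-space estimates for branching Brownian motion with absorption near criticality. The guiding heuristic I would make rigorous is that, in the frame moving at speed $v$, the $N$ particles occupy a strip of some effective width $L$, at whose ends particles are respectively created and deleted; inside this strip the cloud behaves like free BBM with drift $-v$ killed at $0$ and $L$. The expected occupation density then grows at the rate given by the principal eigenvalue of $\tfrac{1}{2}\partial_{xx} - v\partial_x + 1$ on $[0,L]$ with Dirichlet conditions, namely $\lambda(v,L) = 1 - v^2/2 - \pi^2/(2L^2)$. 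Requiring the population to be neither exponentially growing nor shrinking forces $\lambda(v,L)=0$, i.e. $v = \sqrt{2}\sqrt{1 - \pi^2/(2L^2)}$, and substituting the Brunet--Derrida front width $L \approx (\log N)/\sqrt{2}$ (consistent with conjecture~(3) on the diameter) yields $v_N \approx \sqrt{2} - \pi^2/(\sqrt{2}(\log N)^2)$. The entire difficulty lies in converting this eigenvalue bookkeeping into rigorous probabilistic estimates.

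For the upper bound I would run a first-moment (many-to-one) argument: couple the N-BBM with a free BBM and bound the expected number of descendants that remain above a barrier of slope $v$. Standard large-deviation estimates for BBM show that for $v > \sqrt{2} - \pi^2/(\sqrt{2}(\log N)^2) + \varepsilon(\log N)^{-2}$, the expected number of particles able to stay within a window of width $(\log N)/\sqrt{2}$ around such a barrier is $o(N)$; since the selected system must at every time carry exactly $N$ particles, it cannot sustain the front at this speed. Making the window width and the error terms precise to the required order is exactly where the sharp eigenvalue expansion must enter.

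For the lower bound I would use a second-moment / survival argument. The relevant input is the sharp asymptotics for the survival probability and the number of surviving particles of near-critical BBM with absorption in a strip: when $v$ is slightly below the critical value determined by $L$, free BBM killed at the two ends of the strip produces order-$N$ particles with high probability, and a coupling shows the N-BBM front advances at least this fast. The subtle point is that the strip width $L$ is not given in advance but is determined self-consistently by the constraint that the particle count equals $N$; one must show this effective width concentrates at $(\log N)/\sqrt{2}\,(1+o(1))$ sharply enough that the induced correction to $v$ is pinned to order $(\log N)^{-2}$.

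The main obstacle --- and the reason this stays open for N-BBM rather than for the branching random walk --- is twofold. First, the self-consistent feedback between the empirical density of particles and the location of the deletion barrier must be controlled with far greater precision than is needed merely to prove $v_N \to \sqrt{2}$; one effectively needs a quantitative hydrodynamic description of the front, uniform on time scales that are themselves of order $(\log N)^3$. Second, whereas Bérard and Gouéré could exploit the discreteness of the walk, the continuous-space continuous-time setting demands the delicate spectral and Yaglom-type estimates for BBM with absorption near criticality, and transferring these from the idealized killed process to the genuinely interacting $N$-particle system --- where killing is triggered by a particle's \emph{rank} rather than by a fixed barrier --- is the crux of the argument.
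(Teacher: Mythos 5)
The statement you are addressing is Conjecture \ref{speed_conjecture}: the paper does not prove it, and explicitly records it as open --- B\'erard and Gou\'er\'e \cite{Brunet-Derrida_velocity} established the $(\log N)^{-2}$ correction for a branching random walk with selection, but the paper notes that no rigorous adaptation to N-BBM has been given. So there is no proof in the paper to compare yours against; the only question is whether your proposal itself constitutes a proof. It does not. What you have written is the standard Brunet--Derrida heuristic (the Dirichlet eigenvalue $\lambda(v,L) = 1 - v^2/2 - \pi^2/(2L^2)$ on a strip of width $L \approx (\log N)/\sqrt{2}$, with criticality $\lambda = 0$ fixing $v$), together with an accurate summary of the B\'erard--Gou\'er\'e program and of why it has not been carried out in this setting. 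Every point at which rigor is actually required is deferred rather than supplied: in the upper bound you assert that ``standard large-deviation estimates'' give an expected particle count of $o(N)$ at speed $v > \sqrt{2} - \pi^2/(\sqrt{2}(\log N)^2) + \varepsilon (\log N)^{-2}$, but the difficulty is that this first-moment bound must be run in a window whose width is itself only known to the precision you are trying to establish; in the lower bound you state that the effective width ``must be shown'' to concentrate at $(\log N)/\sqrt{2}\,(1+o(1))$ sharply enough to pin the correction, but give no mechanism for proving this; and you yourself name the transfer from killing at a fixed barrier to killing by rank as ``the crux of the argument'' without offering any argument for it.

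To be clear about why these are not merely technical omissions: the rank-versus-barrier issue is exactly what makes the interacting system hard --- the deletion boundary is a random, configuration-dependent object, and controlling its fluctuations uniformly over the relevant $(\log N)^3$ timescale is a quantitative hydrodynamic statement that is not available for N-BBM. The closest existing result is the $N \to \infty$ limit at timescale $(\log N)^3$ in \cite{BBM_with_selection}, which, as the paper itself remarks, cannot be ``unpacked'' from a double-limit statement into an asymptotic for the fixed-$N$ constants $v_N$ of Theorem \ref{speed_linear_score}. Your proposal correctly diagnoses the obstruction, but diagnosing an obstruction is not the same as overcoming it; as it stands, this is a faithful account of why Conjecture \ref{speed_conjecture} remains open, not a proof of it.
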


Although there is no formal proof for the asymptotic of $v_N$, we do have a proof that $v_N$ are monotonically increasing in $N$, following from a useful coupling of the N-BBM between different values of $N$. This is given as Lemma 2.3 in \cite{brunet-derrida} and we state this here also:
\begin{lemma}
\label{N_coupling}
Let $(X_n(t), 1 \leq n \leq N)_{t \geq 0}$ and $(Y_n(t), 1 \leq n \leq N')_{t \geq 0}$ , $N \leq N'$ be standard N-BBMs. Suppose they are initially ordered such that there is a coupling 
$$Y_1(0) \geq X_1(0); \ldots; Y_N(0) \geq X_N(0) $$
Then we can couple $X(t)$ and $Y(t)$ for all times $t$ such that $X_i(t) \leq Y_i(t)$ for all $t \geq 0$ and $1 \leq i \leq N$
\end{lemma}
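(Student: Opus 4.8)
The plan is to realise both processes on a single probability space and to prove by induction over the jump times $J_i$ that the domination $X_n(t) \le Y_n(t)$, $1 \le n \le N$, is preserved across every jump and every intervening Brownian interval. Since the comparison can be built up one particle at a time — comparing the $N$-system to an $(N+1)$-system, that to an $(N+2)$-system, and so on, then composing the resulting couplings — it suffices to treat the case $N' = N+1$. Throughout I therefore assume the $Y$-system carries exactly one \emph{surplus} particle relative to the $X$-system, and I interpolate the initial data so that the intermediate configurations are themselves ordered.

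For the coupling I would drive the $N$ lowest matched particles of the two systems by a common family of Brownian motions, attaching an independent Brownian motion to the surplus $Y$-particle, and I would let each matched pair share a single rate-$1$ branching clock while the surplus particle carries its own independent rate-$1$ clock. When a shared clock rings, both systems duplicate the corresponding particle and each deletes its own current minimum; when the surplus clock rings, only the $Y$-system branches and removes its minimum, and the $X$-system is frozen. The continuous evolution between jumps is then controlled by an elementary rearrangement inequality: if $X_{(i)} \le Y_{(i)}$ for all $i$ and the matched particles receive identical increments, re-sorting at the end of the interval preserves the componentwise order.

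The cleanest way to make this stable under the relabelling forced by the construction is to work with the equivalent \emph{counting} characterisation, namely that $X_{(i)} \le Y_{(i)}$ holds for every $1 \le i \le N$ if and only if for every level $c \in \mathbb{R}$ one has $\#\{i \le N : Y_{(i)} \le c\} \le \#\{X\text{-particles} \le c\}$. By Hall's marriage theorem this is in turn equivalent to the existence of an order-respecting injection of the $X$-particles into the $Y$-particles, which is exactly the structure the coupling produces via the matched Brownian motions. I would verify the continuous step directly from this counting inequality, which is manifestly invariant under simultaneous re-sorting.

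The genuinely delicate point, and the one I expect to be the main obstacle, is showing that a branch-and-remove event preserves the invariant, since selection acts at the \emph{bottom} of each cloud and so shifts all of the labels; there is the additional asymmetry that the invariant compares all $N$ particles of $X$ with the \emph{lowest} $N$ of the $N+1$ particles of $Y$, so the excluded top $Y$-particle must be tracked. For a shared clock this reduces to checking that duplicating a dominated particle and deleting the two minima keeps the counting inequality intact, while for the surplus clock one must show that the extra $Y$-branching — which removes a minimum and inserts a copy of a higher particle, and hence can only weakly decrease $\#\{i \le N : Y_{(i)} \le c\}$ — can never bring a $Y_{(i)}$ below the frozen $X_{(i)}$. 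I would dispatch each of these by a short case analysis on how the two count functions change, using the counting characterisation throughout. Granting this, the invariant propagates across the countably many, locally finite jump times and, combined with the continuous estimate, yields $X_n(t) \le Y_n(t)$ simultaneously for all $t \ge 0$ and all $1 \le n \le N$.
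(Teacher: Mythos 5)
The paper itself contains no proof of this lemma (it is quoted verbatim as Lemma 2.3 of \cite{brunet-derrida}), so your proposal has to stand on its own, and it has a genuine gap at the continuous-evolution step. Your rearrangement inequality only controls the $N$ \emph{matched} particles: identical increments plus re-sorting shows that the matched sub-collection of $Y$ still dominates the sorted $X$-configuration. But the invariant you must propagate constrains the order statistics of \emph{all} $N+1$ particles of $Y$, including the surplus one, and the surplus particle is driven by a Brownian motion independent of everything in $X$. With positive probability it dives below $X_{(1)}$ strictly between two jump times, and then $Y_{(1)}<X_{(1)}$: in your counting formulation, for any level $c$ between the surplus particle and $X_{(1)}$ the left side $\#\{i\le N: Y_{(i)}\le c\}$ is at least $1$ while the right side $\#\{X\text{-particles}\le c\}$ is $0$. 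No case analysis at branching events can repair this, because the violation occurs during the diffusive step, which your argument treats as the easy part.

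The gap is in fact unfixable, because the statement in the bottom-matched form you are proving (lowest $N$ particles of the larger system dominate the smaller system, which is what the lemma says under the paper's increasing-order convention) is false for $N<N'$. Take $N=1$, $N'=2$, all particles started at $0$: then $X$ is just a Brownian motion, and a coupling with $X_1(t)\le Y_1(t)$ for all $t$ would force $\mathbb{P}\left(\min(Y_1(\epsilon),Y_2(\epsilon))\le -\sqrt{\epsilon}\right)\le\mathbb{P}\left(X_1(\epsilon)\le-\sqrt{\epsilon}\right)=\Phi(-1)\approx 0.16$; but restricting to the event of no branching before $\epsilon$ gives the lower bound $e^{-2\epsilon}\left(1-\Phi(1)^2\right)\approx 0.29$ for the left-hand side, a contradiction. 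The version that is true — and the one that suffices for monotonicity of $v_N$, and for the equal-$N$ applications elsewhere in this paper, e.g. in the proof of Theorem \ref{N-BBM_Hitting_Time} — matches $X_i$ against $Y_{i+N'-N}$, i.e.\ against the \emph{top} $N$ particles of the larger system. Notably, your coupling scheme works essentially verbatim for that statement: the surplus particles are then the $N'-N$ lowest-ranked $Y$ particles, whose independent excursions only affect the unconstrained bottom ranks and whose own branching events are no-ops (the duplicated particle is immediately the one removed), and your counting invariant, now in the form $\#\{j\le N': Y_{(j)}\le c\}\le \#\{i\le N: X_{(i)}\le c\}+(N'-N)$ for all $c$, is preserved by both the diffusive and the branching steps. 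For $N=N'$ (no surplus particle) your argument is correct as written.
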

A further strong heuristic for why Conjecture \ref{speed_conjecture} should be true is given by \cite{BBM_with_selection}, where it is shown that at the timescale of $(\log N)^3$ the process converges as $N \rightarrow \infty$ to a specified distribution around $v_N t$ (See Theorem 1.1 for exact setup). Where by timescale here, we mean taking the limits $t \rightarrow \infty$ and $N \rightarrow \infty$ simultaneously, such that $t = t_0(\log N)^3$. The speed here is then shown to be as conjectured, however this is not quite enough to settle the result, as we can't unpack the double limiting result as we would wish to.

Returning to the third conjecture of Brunet and Derrida about the diameter, it is not easy to even rigorously define what "$\approx$" means in this context. A related result is proven by \cite{brunet-derrida}, where adapting the result for one dimension, the authors showed that provided the initial conditions satisfy some technical result there exists some constants $a$ and $c$ such that

\begin{equation}
     \liminf_{N \rightarrow \infty} \mathbb{P}\left( \text{diam}_t \leq a \log N\right) = 1, \text{   for } t = c (\log N)^3
\end{equation}

If we allow ourselves to interpret $(\log N)^3$ as some "large" time depending on $N$, this then gives us an upper bound for the diameter.

It is perhaps interesting now to remark on the results of Pain 2015 \cite{L_BBM} who considered a model related to N-BBM. This model, titled "L-BBM" instead of having a fixed population size $N$ has a fixed population diameter $L$, such that if any particle is further than $L$ away from the leading particle, it is instantly killed. The particles otherwise move and branch as in a free branching Brownian motion.

Pain showed that the velocity of the L-BBM has asymptotic speed
\begin{equation}
\label{L_BBM_velocity}
v_L := \sqrt{2} - \frac{\pi^2}{2 \sqrt{2} L} + o\left(\frac{1}{L^2}\right)
\end{equation}
which if we allow ourselves to naively take the heuristic "$L = \frac{\log N}{2}$", exactly recovers the second order conjectured asymptotic of the N-BBM.
Though this gives a nice connection between L-BBM and N-BBM, it is far from rigorous, and in fact the proof of \ref{L_BBM_velocity} does not make use of any coupling to an N-BBM type model.

Despite all this uncertainty surrounding the N-BBM model, some basic properties of the model are unambiguously and easily seen to be true, two of which are:
\begin{itemize}
    \item The process is translation invariant. i.e. if $X = \left(X_1, \ldots X_N \right)$ is an N-BBM started in position $X_1(0), \ldots X_N(0)$, and $Y$ is an N-BBM started in position $Y_k(0) = X_k(0) + \chi$ for some $\chi \in \mathbb{R}$, then we may couple so that $Y_k(t) = X_k(t) + \chi$
    \item The process is a strong Markov process. Since it is driven by a combination of Brownian motions and exponential distributions, both of which are strong Markov processes.
\end{itemize}
\end{section}

\begin{section}{Theorem  \ref{N-BBM_Hitting_Time}}
We now prove a new result (to this author's knowledge) relating to the expectation of the hitting time of an N-BBM when we give the process a drift, $-\mu$.
\begin{theorem}
\label{N-BBM_Hitting_Time}
Let $X = \left(X_1, \ldots X_N\right)$ be a standard N-BBM with killing from the left started at $X_1(0) = \ldots = X_N(0) = 0$. Then let $\mu$ be fixed with $0 \leq \mu < v_N$ and 
$$\tau_{R, \mu} := \inf \{t > 0: X_1(t) - \mu t \geq R\}$$
Then there are constants $\alpha$ and $\beta$ depending on $\mu$ such that
$$\mathbb{E}[\tau_{R, \mu}] < \alpha + \beta R$$
\end{theorem}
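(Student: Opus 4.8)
The plan is to work in the co-moving frame and reduce the claim to a one-step estimate via a subadditivity argument. Set $Z_i(t) := X_i(t) - \mu t$; since subtracting the deterministic quantity $\mu t$ from every coordinate preserves the ordering, and hence the selection rule, $Z = (Z_1,\dots,Z_N)$ is itself an $N$-BBM, but now driven by Brownian motions with drift $-\mu$ and started from the origin. In this frame $\tau_{R,\mu} = T(R) := \inf\{t>0 : Z_1(t) \ge R\}$, and Theorem \ref{speed_linear_score} gives $Z_1(t)/t \to v_N - \mu =: \delta > 0$ almost surely, so $T(R) < \infty$ a.s. Writing $f(R) := \mathbb{E}[T(R)]$, the goal is exactly $f(R) < \alpha + \beta R$.

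First I would establish an attractiveness (monotone coupling) property for two $N$-BBMs with the same $N$ but different ordered initial data: if $\eta_i \ge \eta'_i$ for all $i$, then the two systems can be coupled so that $Z_i^{\eta}(t) \ge Z_i^{\eta'}(t)$ for all $t$ and $i$. This is the same-$N$ analogue of Lemma \ref{N_coupling}, proved the same way by coupling the branching clocks, the uniform choices, and the driving Brownian motions and checking the ordering survives each jump. Granting this, I would run the process to time $T(a)$, at which point $Z_1(T(a)) \ge a$ and hence every coordinate is $\ge a$; this configuration dominates the collapsed configuration with all particles at $a$. By attractiveness the front started from the true configuration lies above the front started from the collapsed-at-$a$ configuration, so the extra time to climb from $a$ to $a+b$ is stochastically dominated by the climbing time out of the collapsed-at-$a$ state, which by translation invariance is distributed as $T(b)$. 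Combining this with the strong Markov property at $T(a)$ yields the subadditivity
\[
f(a+b) \le f(a) + f(b),
\]
and $f$ is clearly nondecreasing. Consequently, provided $f(1) < \infty$, Fekete's lemma (or simply iterating) gives $f(R) \le f(\lceil R\rceil) \le \lceil R \rceil f(1) < f(1) + f(1)\,R$, the desired bound with $\alpha = \beta = \mathbb{E}[T(1)]$.

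This reduces everything to the single finiteness statement $\mathbb{E}[T(1)] < \infty$, and this is the step I expect to be the real obstacle: almost-sure finiteness of $T(1)$, which Theorem \ref{speed_linear_score} already supplies, does not by itself give a finite mean. Since $\mathbb{E}[T(1)] = \int_0^\infty \mathbb{P}(T(1) > t)\,dt$ and
\[
\mathbb{P}(T(1) > t) \le \mathbb{P}(Z_1(t) < 1) = \mathbb{P}\big(X_1(t) < \mu t + 1\big),
\]
finiteness follows once this lower-tail probability is shown to be integrable in $t$. Because $\mu < v_N$, for large $t$ the event $\{X_1(t) < \mu t + 1\}$ is a lower large deviation of the front below its speed $v_N$, so what is needed is a quantitative, summable (e.g.\ exponentially small) bound on $\mathbb{P}\big(X_1(t) \le (v_N - \tfrac{\delta}{2})t\big)$. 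I would obtain such a bound either from the regeneration structure underlying the proof of Theorem \ref{speed_linear_score}, whose breakout times carry exponential moments, or by a direct many-lineages argument: partition time into blocks, use the branching to produce, with probability bounded away from $0$ in each block, a descendant lineage advancing by a fixed positive amount, and exploit near-independence of the blocks to obtain a geometric tail for the front failing to progress. The careful, configuration-uniform version of this progress estimate is where the genuine work lies; the subadditivity and Fekete steps above are then purely formal.
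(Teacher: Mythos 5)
Your reduction is correct and genuinely different from the paper's route: passing to the co-moving frame, using the same-$N$ monotone coupling (a special case of Lemma \ref{N_coupling}), dominating the time-$T(a)$ configuration by the configuration collapsed at $a$, and applying translation invariance plus the strong Markov property does give $f(a+b) \le f(a) + f(b)$, hence $f(R) \le \lceil R \rceil f(1)$. This is an attractive simplification, because linearity in $R$ then comes for free once $f(1) < \infty$. But that is precisely where the proposal has a genuine gap: you never prove $\mathbb{E}[T(1)] < \infty$, and neither of your two suggested routes is carried out. This is not a routine verification to be deferred --- it is the entire analytic content of the theorem, since Theorem \ref{speed_linear_score} only provides almost sure statements, and, as you yourself note, almost sure finiteness of $T(1)$ says nothing about its mean. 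Your reduction therefore relocates the difficulty without resolving it: what remains to be shown is an integrable-in-$t$ bound on $\mathbb{P}\left(X_1(t) < \mu t + 1\right)$, a quantitative lower-deviation estimate for the front that is not available anywhere in the toolkit you invoke.

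Moreover, the ``many-lineages'' route you sketch would fail as described. Producing, in each time block, a single descendant lineage that advances by a fixed amount bounds the \emph{maximum} $X_N$ from below, not the \emph{minimum} $X_1$; but $\tau_{R,\mu}$ is defined through $X_1$, the rear of the whole cloud. The minimum only advances through the selection mechanism (removal of leftmost particles at branching events), and the probability that the entire cloud advances in a unit time block is not bounded below uniformly in the configuration --- it degrades as the diameter grows, which is exactly the ``configuration-uniform'' issue you flag but leave open. The paper circumvents the need for any front lower-deviation estimate: it introduces approximate renewal times $T^\epsilon_n$ at which all particles lie within $\epsilon$ of one another. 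Such near-collapse \emph{can} be forced with probability bounded below uniformly in the configuration, because it only requires the uniform duplication choices to repeatedly select the top particle while the Brownian motions stay put --- no progress of the front is needed. The a.s. speed theorem combined with the strong law of large numbers then shows the i.i.d.\ increments between renewal times have strictly positive mean for $\epsilon$ small, and the first-passage estimate for random walks (Lemma \ref{random_sum_expectation}) yields the linear bound. If you completed your missing step via a regeneration structure, you would essentially be reproducing this renewal argument, at which point the subadditivity step becomes superfluous.
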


\begin{subsection}{Proof of Theorem  \ref{N-BBM_Hitting_Time}}

When giving the proof of this theorem, we break it down into the cases $N =2$ and $N>2$. This is not because the case of $N=2$ is more difficult but rather the opposite; the proof for $N > 2$ does work for $N=2$ however $N=2$ is much simpler. We present this therefore as an easier to follow "warm up" result. Additionally we break up the proof in this manor to record the progress of this dissertation, since before the proof of $N>2$ was found, we coupled $N>2$ to $N=2$ to prove the above result for all $N$, but with the more restrictive assumption of $\mu < v_2 = \frac{3}{8 \sqrt{2}}$.

Instrumental in our proof will be the Lemma:
\begin{lemma}[First passage times of random sums]
\label{random_sum_expectation}
Let $L_1, L_2, \ldots$ be i.i.d random variables with $\mathbb{E}[L_n] > 0$. Then let $S_n := \sum_{k=1}^n L_k$ and for any $R \in \mathbb{R}^+$ let $\tau_R := \inf\{n : S_n \geq R \}$

Then there exists constants $\alpha$ and $\beta$ depending only on the distribution of $L_1$ such that:

$$\mathbb{E}[\tau_R] \leq \alpha + \beta R$$

\end{lemma}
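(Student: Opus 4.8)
The plan is to apply Wald's identity, but first to tame the possibly heavy \emph{positive} tail of $L_1$, which could otherwise make the overshoot $S_{\tau_R} - R$ have infinite expectation and break a naive argument. Write $m := \mathbb{E}[L_1] > 0$ (so in particular $\mathbb{E}[L_1^-] < \infty$ and the increments are integrable). Since $\mathbb{E}[L_1 \wedge c] \uparrow m$ as $c \to \infty$, I would first fix a truncation level $c > 0$ large enough that $\tilde m := \mathbb{E}[L_1 \wedge c] > 0$. Setting $\tilde L_k := L_k \wedge c$ and $\tilde S_n := \sum_{k=1}^n \tilde L_k \leq S_n$, the truncated passage time $\tilde\tau_R := \inf\{n : \tilde S_n \geq R\}$ satisfies $\tilde\tau_R \geq \tau_R$, so it suffices to bound $\mathbb{E}[\tilde\tau_R]$ from above.

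The gain from truncating \emph{from above} is a two-sided control of the stopped walk. Before the passage time we have $\tilde S_n < R$ by definition, while at the passage time the final increment is at most $c$, so $\tilde S_{\tilde\tau_R} < R + c$. Combining these, the key claim is that for every fixed $n$,
\[ \tilde S_{\tilde\tau_R \wedge n} < R + c \quad \text{almost surely}, \]
since on $\{\tilde\tau_R > n\}$ the left-hand side equals $\tilde S_n < R$, and on $\{\tilde\tau_R \leq n\}$ it equals $\tilde S_{\tilde\tau_R} < R + c$.

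I would then apply Wald's identity — equivalently, optional stopping for the mean-zero martingale $\tilde S_n - n\tilde m$ — at the \emph{bounded} stopping time $\tilde\tau_R \wedge n$, which is legitimate with no integrability hypothesis beyond $\mathbb{E}|\tilde L_1| < \infty$. This yields $\tilde m\, \mathbb{E}[\tilde\tau_R \wedge n] = \mathbb{E}[\tilde S_{\tilde\tau_R \wedge n}] < R + c$ for every $n$. Letting $n \to \infty$ and invoking monotone convergence gives $\mathbb{E}[\tilde\tau_R] \leq (R+c)/\tilde m$, which simultaneously establishes finiteness and the desired linear bound. Hence $\mathbb{E}[\tau_R] \leq \mathbb{E}[\tilde\tau_R] \leq c/\tilde m + R/\tilde m$, and $\alpha := c/\tilde m$, $\beta := 1/\tilde m$ depend only on the law of $L_1$.

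The one genuine subtlety — and the step I would be most careful about — is the circularity in a naive use of Wald: the identity $\mathbb{E}[\tilde S_{\tilde\tau_R}] = \tilde m\,\mathbb{E}[\tilde\tau_R]$ presupposes $\mathbb{E}[\tilde\tau_R] < \infty$, which is exactly what we are trying to prove. Stopping at the bounded time $\tilde\tau_R \wedge n$ and passing to the limit is precisely the device that breaks this circularity, and it avoids importing the renewal-theoretic fact that positive-drift walks have finite-mean first passage times. The truncation is the other essential ingredient: without it the overshoot bound $\tilde S_{\tilde\tau_R} < R + c$ fails, and a heavy right tail of $L_1$ could make $\mathbb{E}[S_{\tau_R}]$, and hence the constant $\alpha$, blow up.
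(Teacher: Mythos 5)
Your proof is correct, but it takes a genuinely different route from the paper: the paper does not prove this lemma at all, it cites Equation (1.5) of \cite{random_sum_expectations}, and separately mentions (but omits) an elementary argument requiring the extra assumption $\mathbb{E}[|L_1|^3] < \infty$, via a Chebyshev-type bound on $\mathbb{P}(S_n \leq R)$ summed over $n$ using $\mathbb{E}[\tau_R] = \sum_n \mathbb{P}(\tau_R \geq n)$. Your argument --- truncate the increments from above at a level $c$ chosen so that $\tilde{m} := \mathbb{E}[L_1 \wedge c] > 0$, obtain the two-sided control $\tilde{S}_{\tilde{\tau}_R \wedge n} < R + c$, apply optional stopping to the mean-zero martingale $\tilde{S}_n - n\tilde{m}$ at the bounded time $\tilde{\tau}_R \wedge n$, and let $n \to \infty$ by monotone convergence --- is self-contained, correctly breaks the circularity in a naive use of Wald's identity, and transfers back to $\tau_R$ via $\tau_R \leq \tilde{\tau}_R$. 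The comparison: the paper's citation buys brevity at the price of an external dependency, and its sketched alternative imposes a third-moment condition absent from the statement; yours costs a few lines of martingale theory but works under exactly the stated hypothesis (indeed it even tolerates $\mathbb{E}[L_1^+] = \infty$, since only the truncated increments need to be integrable --- your parenthetical ``the increments are integrable'' should strictly say this) and yields explicit constants $\alpha = c/\tilde{m}$, $\beta = 1/\tilde{m}$ depending only on the law of $L_1$, which is precisely what Lemma \ref{random_sum_expectation} asserts.
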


\begin{proof}
This is given by \cite{random_sum_expectations} Equation (1.5). We have also found but omit an elementary proof under the assumption $\mathbb{E}[|L_1|^3] < \infty$. (Using a higher order Chebyshev-like inequality to bound $\mathbb{P}[S_n \leq R]$, and then using $\mathbb{E}[\tau] = \sum \mathbb{P}[\tau \geq n] \leq \sum \mathbb{P}[S_n \leq R]$)

\end{proof}

In our proof we find a way to compare the N-BBM to a random variable that looks like $\sum_{k=1}^n L_k$, which will allow us to use the above Lemma.

\begin{proof}[Proof ($N=2$):]

To prove the case $N=2$ we make use of the fact that the 2-particle system has a regenerative structure that makes computation much easier, since at every time the rightmost particle branches, both particles return to the same point. This allows us to be much more explicit with our calculations, and in fact even allows us to calculate $v_2$ exactly.

We let $T_n$ be the branching times of the leftmost particle of $X$ (since nothing happens when the rightmost particle branches). Then $T_n$ form a Poisson process of rate 1, and we may define a discrete time Markov chain $Z_n := X_1(T_n) - \mu T_n$

Hence if the i.i.d random variables $L_1, L_2, \ldots$ have the distribution of 
$$ L_k \overset{\mathrm{d}}{=}
 \left( \max \left\{ B^1(T), B^2(T) \right\} - \mu T \right)$$
, where $B^1(t)$, $B^2(t)$ are i.i.d Brownian Motions and $T$ is an independent exponential time, then by the strong Markov property at $T_n$ we can see that $Z_n$ has the same distribution as $\sum_{k=1}^n L_k$.
Then by for example \cite{BM_Handbook} 1.0.5 we may find that the distribution for $B^1(T)$ is given by 

$$\mathbb{P}(B^1(T) \in dz) = \frac{1}{\sqrt{2}} e^{-|z| \sqrt{2}}$$

We may use this to calculate $\mathbb{E}[L_k]$. Giving:

$$\mathbb{E}(L_k) = \mathbb{E}\left(\max \left\{ B^1(T), B^2(T) \right\} \right) - \mu \mathbb{E}(T)$$
$$=- \mu + \int_{x=-\infty}^{x=\infty}\int_{y=x}^{y=\infty} \frac{y}{2} e^{-\sqrt{2} (|x| + |y|)}dydx$$
$$= \frac{3}{8 \sqrt{2}} - \mu$$

Where the constant $\frac{3}{8 \sqrt{2}}$ comes from evaluating the integral.\footnote{This constant is in fact equal to $v_2$ since, we can view the process $Z_n$ as a renewal reward process (see e.g. \cite{RossSheldonM.2019Itpm} Chapter 7) and so the Elementary Renewal Theorem for Renewal Reward processes gives that 

$$v_2 - \mu = \lim_{n \rightarrow \infty} \frac{X_1(T_n) - \mu T_n}{T_n} = \frac{\mathbb{E}[L_k]}{\mathbb{E}[T_1]} = \frac{3}{8 \sqrt{2}} - \mu$$}

Then 
$$\mathbb{E}({\tau_{R, \mu}}) = \int_0^{\infty} \mathbb{P}({\tau_{R, \mu}} \geq t) dt =\mathbb{E}\left[ \int_0^{\infty} \mathbbm{1}_{{\tau_{R, \mu}} \geq t} dt\right]$$

by Fubini. If $N_t := \sup \{n : T_n \leq t \}$ is then the counting process corresponding to the Poisson process of branching times $T_n$, then since $T_{N_t} \leq t$ it follows that 
$$\mathbb{E}\left[ \int_0^{\infty} \mathbbm{1}_{{\tau_{R, \mu}} \geq t} dt \right] \leq \mathbb{E}\left[ \int_0^{\infty} \mathbbm{1}_{{\tau_{R, \mu}} \geq T_{N_t}} dt \right] = \mathbb{E}\left[ \sum_{k=1}^\infty (T_{k+1} - T_k)\mathbbm{1}_{{\tau_{R, \mu}} \geq T_{k}}\right]$$
$$ =  \sum_{k=1}^\infty \mathbb{E}\left[(T_{k+1} - T_k)\mathbbm{1}_{{\tau_{R, \mu}} \geq T_{k}}\right] = \sum_{k=1}^\infty  \mathbb{E}\left[\mathbbm{1}_{{\tau_{R, \mu}} \geq T_{k}}\right]$$

Where we have used Fubini again, and that since as ${\tau_{R, \mu}}$ and $T_k$ are stopping times for the strong Markov process $X$, then $T_{k+1} - T_k$ is independent from the $\mathcal{F}_{T_k}$ measurable random variable $\mathbbm{1}_{{\tau_{R, \mu}} \geq T_k}$, and furthermore since $T_{k+1} - T_k$ is a rate 1 exponential distribution, its expectation is $1$. 

Finally then, if we define $\tilde{\tau}_{R, \mu}$ to be the first time that $Z_n$ exceeds $R$, i.e. $\tilde{\tau}_{R, \mu} = \inf \{n: Z_n \geq R \}$, then

$$ \sum_{k=1}^\infty  \mathbb{E}\left[\mathbbm{1}_{{\tau_{R, \mu}} \geq T_{k}}\right] \leq \sum_{k=1}^\infty  \mathbb{P}\left(\tilde{\tau}_{R, \mu} \geq k\right) = \mathbb{E}[\tilde{\tau}_{R, \mu}] \leq \alpha + \beta R$$

Where in the final step we have applied Lemma \ref{random_sum_expectation}, since $Z_n = \sum_{k=1}^n L_k$

\end{proof}

In order to modify this proof so that it works for $N>2$ we have to make some changes. Firstly, we fail to get times $T_n$ such that the process renews at this point, since for $N > 2$ the probability that we have at least three particles in the same position at any time $t >0$ is $0$. The basic idea to get around this issue is to define times 
$$T^{\epsilon}_n \approx \inf \left\{t>T^{\epsilon}_{n-1} + 1: \sup_{1 \leq n,m \leq N} |X_n(t) - X_m(t)| \leq \epsilon \right\}$$

Which are times where the particles are \emph{almost} at the same position. The precise definition of $T_n^{\epsilon}$ is slightly different as we also want to ensure that $T^{\epsilon}_n - T^{\epsilon}_{n-1}$ are i.i.d random variables. We then couple processes $X^{\epsilon, +}$ and $X^{\epsilon, -}$ which correspond to moving all the particles at time $T_n^\epsilon$ to the position of $X_1(T_n^\epsilon)$ and $X_N(T_n^\epsilon)$ respectively. Since the process is very close together at time $T_n^\epsilon$ the result is that we "only lose $\epsilon$" from the drift at each replacement. And then the bounding processes $X^{\epsilon, +}$ and $X^{\epsilon, -}$ can be analysed in a similar manor to the proof of $N=2$, since at the stopping times $T^\epsilon_n$ they are the sum of i.i.d random variables.

\begin{proof}[Proof ($N \geq 2$):]
Let $Y_k(t) := X_k(t) - \mu t$

Then let $\epsilon > 0$ be a constant to be fixed later and iteratively construct processes $Y^{\epsilon, \pm} = (Y^{\epsilon, \pm}_1, \ldots, Y^{\epsilon, \pm}_N)$and the stopping times $T_n^\epsilon$.

Firstly, let $T^\epsilon_0 = 0$ and $Y^{\epsilon, -}_i(0) =0 =Y^{\epsilon, +}_i(0)$ for $1 \leq i \leq N$.
Now, we inductively construct $Y^{\epsilon, -}$ and $Y^{\epsilon, +}$ such that: 
\begin{itemize}
    \item $Y^{\epsilon, -}_i(t) \leq Y_i(t) \leq Y^{\epsilon, +}_i(t)$
    \item $Y^{\epsilon, +}_i(t) = Y^{\epsilon, -}_i(t) + n \epsilon$ for $t \in [T_n^\epsilon, T_{n+1}^\epsilon)$
    \item At time $T_{n}^\epsilon$ all the particles in $Y^{\epsilon, -}$ are in the same position, and similarly for $Y^{\epsilon, +}$ .
\end{itemize}

Assume that we have constructed $T_{n}^\epsilon$; $Y^{\epsilon, +}(t)$ and $Y^{\epsilon, -}(t)$ for all $t \leq T_{n}^\epsilon$. Then since $T_n^{\epsilon}$ is a stopping time, we may use the strong Markov property to note that for $t \geq T_n^\epsilon$, $Y(t)$ behaves as an independent N-BBM. Then since at time $T_n^\epsilon$, $Y^{\epsilon, -}_i(T_n^\epsilon) \leq Y_i(T_n^\epsilon) \leq Y^{\epsilon, +}_i(T_n^\epsilon)$, we may use Lemma \ref{N_coupling} (with the same values of $N$) to couple the processes $Y^{\epsilon, -}(t)$, $Y(t)$ and $Y^{\epsilon, +}(t)$, where we will couple these process up to a stopping time $T_{n+1}^\epsilon$ (about to be defined)

Then in fact if we unpack this coupling process, we are just using the same Brownian Motions for each process between branching times, and immediately after branching times we choose the Brownian motions such that particles in the same relative position have the same Brownian motion (i.e. $Y_i(t)$ evolves by the same Brownian motion as say $Y^{\epsilon, +}_i(t)$) between branching times). It follows then that whilst $Y^{\epsilon, +}$ and $Y^{\epsilon, -}$ are evolving according to this coupling, they are exactly translated versions of each other, since at time $T_n^\epsilon$ both systems had all particles in one place.

Therefore if we define 
$$T_{n+1}^{\epsilon} := \inf \left\{t> T_n + 1: \sup_{1 \leq i,j \leq N} |Y^{\epsilon, -}_i(t) - Y^{\epsilon,-}_j(t)| \leq \epsilon \right\}$$

Then this stopping time will also be the first time that the particles of $Y^{\epsilon, +}$ are all within $\epsilon$ of each other. 

Finally then, at time $T^{\epsilon}_{n+1}$ we define $Y^{\epsilon, -}_1(T^{\epsilon}_{n+1})$ to be the value given by the coupling, and then we redefine $Y^{\epsilon, -}_i(T^{\epsilon}_{n+1}) := Y^{\epsilon, -}_1(T^{\epsilon}_{n+1})$ for all $1 \leq i \leq N$. 

Now let us check that we have satisfied the inductive properties we claimed:

\begin{itemize}
    \item $Y^{\epsilon, -}_i(T^{\epsilon}_{n+1}) := Y^{\epsilon, -}_1(T^{\epsilon}_{n+1}) \leq Y_1(T^{\epsilon}_{n+1}) \leq Y_i(T^{\epsilon}_{n+1})$
    
    Since $Y^{\epsilon, -}_1(T^{\epsilon}_{n+1})$ is the value given by the coupling, and we know by construction that this is bounded above by $Y_1(T^{\epsilon}_{n+1})$

    \item $Y_i(T^{\epsilon}_{n+1}) \leq Y^{\epsilon, +}_i(T^{\epsilon}_{n+1}) \leq Y^{\epsilon, +}_1(T^{\epsilon}_{n+1}) + \epsilon = Y^{\epsilon, -}_1(T^{\epsilon}_{n+1}) + (n+1) \epsilon$
    
    Where the values of $Y^{\epsilon, +}$ above are given by the coupling. These inequalities follow since we know at time $T_{n+1}^\epsilon$ the particles of $Y^{\epsilon, +}$ are at most $\epsilon$ apart, and furthermore, before we define the values at time $T^{\epsilon, +}$ we have that $Y_1^{\epsilon, +}(T^\epsilon_{n+1}) = Y_1^{\epsilon, -}(T^\epsilon_{n+1}) + n \epsilon$
\end{itemize}

Hence, if we now redefine $Y_i^{\epsilon, +}(T^\epsilon_{n+1}) := Y^{\epsilon, -}_1(T^{\epsilon}_{n+1}) + (n+1) \epsilon$ we complete the inductive definition and get the required properties.

Next, note that between times $T^\epsilon_n$ and $T^{\epsilon}_{n+1}$ the processes $Y{\epsilon, \pm}$ evolve as N-BBMs started with all particles at a single point, where here we are invoking the strong Markov property and that $T^\epsilon_n$ are stopping times adapted to the filtration on which $Y$, $Y^{\epsilon, \pm}$ are defined. Hence we can see from the definition of $T_{n}^\epsilon$ that the random variables $S_k^\epsilon := T^\epsilon_k - T^\epsilon_{k-1}$ are i.i.d. And furthermore that the random variables $L_k^\epsilon := Y^{\epsilon, -}(T^\epsilon_k) - Y^{\epsilon, -}(T^\epsilon_{k-1})$ are also i.i.d. Note also that from the coupling we have $Y^{\epsilon, +}(T^\epsilon_k) - Y^{\epsilon, +}(T^\epsilon_{k-1}) = L_k^\epsilon + \epsilon$

The point now, is at the times $T^{\epsilon}_n = \sum_{k=1}^n S_k^\epsilon$, we have that $Y^{\epsilon, -}(T^{\epsilon}_n) = \sum_{k=1}^n L_k^\epsilon$ and $Y^{\epsilon, +}(T^{\epsilon}_n) = \sum_{k=1}^n (L_k^\epsilon + \epsilon)$

It remains to be shown, however that $\mathbb{E}[S_k^\epsilon] < \infty$. To this end let $\mathcal{F}_t$ be the filtration to which the whole process is adapted, and we will show that $S_1^\epsilon = T_1^\epsilon$ is bounded by a geometric random variable. Similar arguments to this will be used several times throughout this dissertation so will not go into too much detail (see proof of Proposition \ref{hitting_time_finite} and for a more detailed version of this argument)

Consider deterministic times $t=k$ for $k = 1, 2, \ldots$. Then at each time let $U_k$ be the event that 
\begin{itemize}
    \item No particles branch during time $t \in [k, k+1/2)$
    \item $\sup_{1 \leq n \leq N-1} \sup_{t \in [k, k+1/2)} |Y_i(t) - Y_i(k)| \leq 1/2$
    \item The particle furthest to the right $Y_N(t)$ moves up by 1 during time $t \in [k, k+1/2)$ so $Y_N(k+1/2) - Y_N(k) > 1$
    \item During time $t \in [k+1/2, k+1)$ None of the Brownian motions driving the particles move by more than $\epsilon / N$
    \item The largest particle branches $N$ times during time $t \in [k+1/2, k+1)$
\end{itemize}

By the Markov property of the system, it is then clear that $U_k$ are independent events of the same probability, and then by standard properties of Brownian motions and exponential distributions it follows that $\mathbb{P}(U_k) > 0$. Additionally if the event $U_k$ happens then $T_1^\epsilon \leq k+1$ since the largest particle branching $N$ times without any particles moving very far ensure that all particles are contained within $\epsilon$ of each other.

Hence 
$$\mathbb{P}(T_1^\epsilon \leq k+1 \; | \; \mathcal{F}_k) \geq \mathbb{P}(U_k) = \mathbb{P}(U_1)$$

And hence by Lemma \ref{stopping_time_expectation} $\mathbb{E}[T^\epsilon_1] < \infty$

Now we wish to show that $\mathbb{E}[L_k^\epsilon] > 0$ for $\epsilon$ sufficiently small. to do this, note that as $n \rightarrow \infty$ we have $T_n^\epsilon \rightarrow \infty$ since by construction $S_k^\epsilon \geq 1$. Hence by Theorem \ref{speed_linear_score}
$$\lim_{n \rightarrow \infty} \frac{Y_1(T^\epsilon_n)}{T^\epsilon_n} = v_N - \mu > 0$$ almost surely

But then by the strong law of large numbers $$\lim_{n \rightarrow \infty} \frac{Y^{\epsilon, -}_1(T^{\epsilon}_n)}{T^\epsilon_n} = \frac{ \lim_{n \rightarrow \infty} \left(\frac{1}{n} \sum_{k=1}^n L_k^\epsilon\right) }{\lim_{n \rightarrow \infty} \left(\frac{1}{n} \sum_{k=1}^n S_k^\epsilon\right)} = \frac{\mathbb{E}[L_1^\epsilon]}{\mathbb{E}[S_1^\epsilon]}$$
and similarly 

$$\lim_{n \rightarrow \infty} \frac{Y^{\epsilon, +}_1(T^{\epsilon}_n)}{T^\epsilon_n}  = \frac{\mathbb{E}[L_1^\epsilon] + \epsilon}{\mathbb{E}[S_1^\epsilon]}$$

And so since $Y_1$ is sandwiched between $Y^{\epsilon, -}$ and $Y^{\epsilon, +}$ it follows that

$$\frac{\mathbb{E}[L_1^\epsilon]}{\mathbb{E}[S_1^\epsilon]} \leq v_N - \mu \leq \frac{\mathbb{E}[L_1^\epsilon] + \epsilon}{\mathbb{E}[S_1^\epsilon]}$$

Then note that the map $\epsilon \mapsto \mathbb{E}[S_1^\epsilon]$ is non-increasing, and hence we make take $\epsilon$ small enough so that $\frac{\epsilon}{\mathbb{E}[S_1^\epsilon]} <\frac{v_N - \mu}{2}$

Hence for some fixed small $\epsilon$ we have that $\mathbb{E}[L_1^\epsilon] \geq \frac{v_N - \mu}{2} > 0$.

Then since $t \geq T^\epsilon_{N_t^\epsilon}$, we calculate:
$$\mathbb{E}[\tau_{R, \mu}] = \mathbb{E}\left[ \int_0^{\infty} \mathbbm{1}_{{\tau_{R, \mu}} \geq t}\right] \leq \mathbb{E}\left[ \int_0^{\infty} \mathbbm{1}_{{\tau_{R, \mu}} \geq T^\epsilon_{N_t^\epsilon}}\right] = \mathbb{E}\left[ \sum_{k=0}^\infty (T^\epsilon_{k+1} - T^\epsilon_k)\mathbbm{1}_{{\tau_{R, \mu}} \geq T^\epsilon_{k}}\right]$$
$$ =  \sum_{k=0}^\infty \mathbb{E}\left[(T^\epsilon_{k+1} - T^\epsilon_k)\mathbbm{1}_{{\tau_{R, \mu}} \geq T^\epsilon_{k}}\right] = \mathbb{E}[S^\epsilon_1]\sum_{k=0}^\infty  \mathbb{E}\left[\mathbbm{1}_{{\tau_{R, \mu}} \geq T^\epsilon_{k}}\right]$$

Where this final step follows since $\tau_{R, \mu}$ and $T^{\epsilon}_n$ are stopping times, so the event $\{\tau_{R, \mu} \geq T^\epsilon_k \} \in \mathcal{F}_{T_k^\epsilon}$ and hence by the strong Markov property, $S^\epsilon_{k+1} = T^\epsilon_{k+1} - T^\epsilon_k$ is independent from $\mathcal{F}_{T_k^\epsilon}$ 

Now proceeding a similar manor to the case of $N=2$, define $Z_n := Y^{\epsilon, -}_1(T_n^\epsilon)$, and 
define $N_t^\epsilon := \sup \{n: T_n^\epsilon \leq t\}$ the counting process associated with the renewal process $T^\epsilon_n$. Then let $\tilde{\tau}_{R, \mu} := \inf\{n: Z_n \geq R\}$ be the first time that $Z_n$ exceeds $R$.

Then $\mathbb{P}(\tau_{R, \mu} \geq T^\epsilon_k) \leq \mathbb{P}(\tilde{\tau}_{R, \mu} \geq k)$ follows from the fact that  $Z_n \leq Y_N(T_n^\epsilon)$.

Hence 
$$\mathbb{E}[\tau_{R, \mu}] \leq \mathbb{E}[S^\epsilon_1]\sum_{k=0}^\infty  \mathbb{E}\left[\mathbbm{1}_{{\tilde{\tau}_{R, \mu}} \geq T^\epsilon_{k}}\right] \leq \mathbb{E}[S^\epsilon_1]\left(1 + \mathbb{E}[\tilde{\tau}_{R, \mu}] \right)$$

Then finally we deduce from Lemma \ref{random_sum_expectation} that $\mathbb{E}[\tilde{\tau}_{R, \mu}] < \alpha' + \beta' R$ for some ($\mu$ dependent) $\alpha'$ and $\beta'$ since $Z_n$ is a random sum of the i.i.d variables $\sum_{k=1}^n L^\epsilon_k$ and $\mathbb{E}[L^\epsilon_k] > 0$

And hence $\mathbb{E}[\tau_{R, \mu}] \leq \alpha + \beta R$
\end{proof}
\end{subsection}
\end{section}

\chapter{Brownian Bees with Drift}

We now move onto Brownian Bees with drift. To provide some (brief) background, the results in this section are inspired by the papers by J. Berestycki et al \cite{swarm_limit, free_boundary_problem}. The results of these papers can be summarized in a commutative diagram, concerning Brownian Bees\footnote{In these papers the authors use the term "N-BBM" to refer to the Brownian Bees, however in this dissertation we reserve this term for the process with killing from the left/right} in $d$ dimensions with no drift (see \cite{swarm_limit} for more discussion and explanation of this diagram):

\begin{figure}[h]
\includegraphics[width=\textwidth]{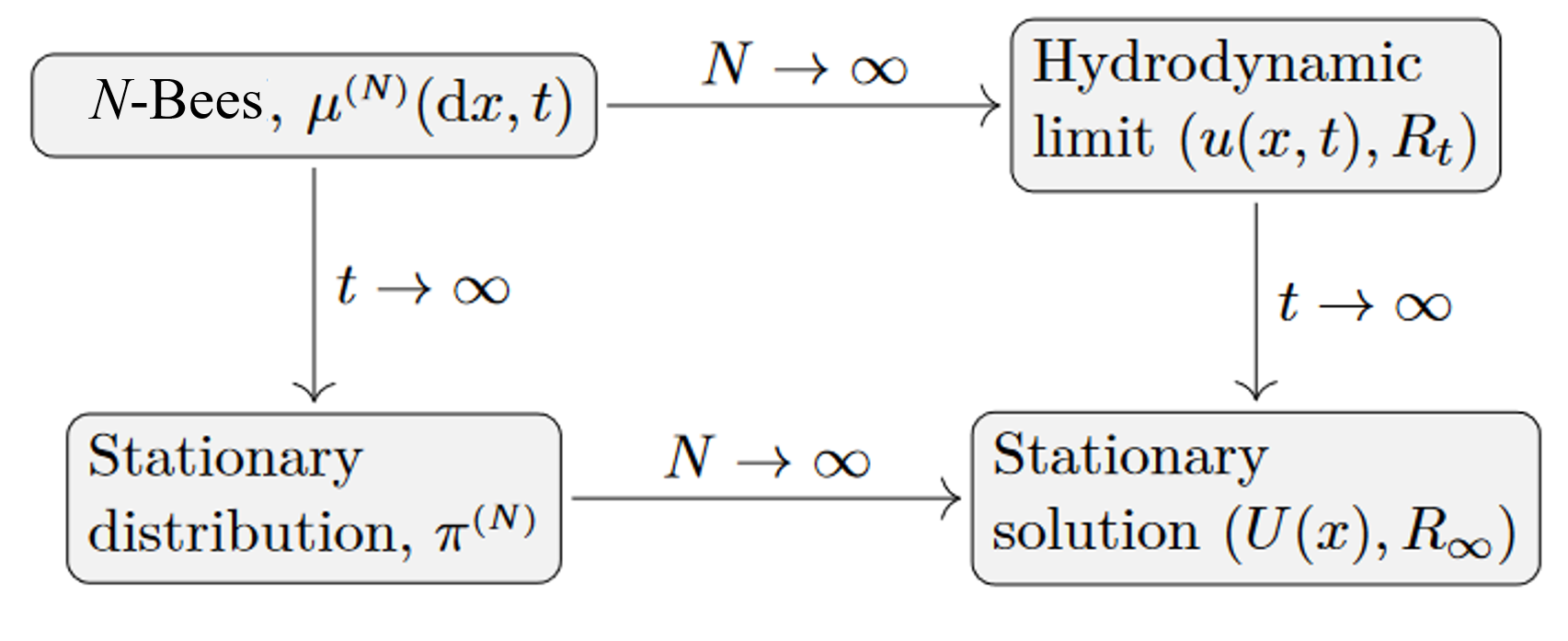}
\end{figure}

For this dissertation we focus on the left hand side of this diagram: the convergence in time of the Brownian Bees process. We state the corresponding result from \cite{swarm_limit}

\begin{theorem}
\label{bees_time_convergence}
    Let $X^{(N)}(t)$ be $d$ dimensional Brownian Bees. Then for $N$ sufficiently large, the process $(X^{(N)}(t), t>0)$ has a unique invariant measure $\pi^{(N)}$, a probability measure on $\left(\mathbb{R}^d \right)^N$. For any $\chi \in \left(\mathbb{R}^d \right)^N$, under $\mathbb{P}_{\chi}$, the law of $(X^{(N)}(t)$ converges in total variation norm to $\pi^{(N)}$ as $t \rightarrow \infty$:
    $$ \lim_{t \rightarrow \infty} \sup_{C} |\mathbb{P}_{\chi} \left(X^{(N)}(t) \in C \right) - \pi^{(N)}(C) | = 0$$
\end{theorem}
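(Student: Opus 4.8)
The plan is to place the result within the standard Meyn--Tweedie framework for geometrically ergodic Markov processes, establishing it through a Foster--Lyapunov drift condition together with a local minorization (``small set'') condition and then invoking Harris' theorem. I would work with the discrete-time skeleton chain $\Phi_n := X^{(N)}(n t_0)$ for a fixed $t_0 > 0$. Since $X^{(N)}$ is a strong Markov (indeed Feller) process, an invariant measure for the skeleton is automatically invariant for the full process, and geometric convergence of the skeleton in a weighted total-variation norm upgrades to convergence at all continuous times via the semigroup property together with the fact that total-variation distance to an invariant measure is non-increasing under the dynamics.

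First I would establish the drift condition, which is where the compacting effect of the selection rule must be quantified. Taking $V(\chi) = \sum_{i=1}^N |x_i|^2$ (a smoothed $\max_i |x_i|^2$ would serve equally well), I would show that there exist $\gamma \in (0,1)$ and $K < \infty$ with $\mathbb{E}_\chi[V(\Phi_1)] \le \gamma V(\chi) + K$. The crucial mechanism is that ``furthest from the origin'' is measured from a \emph{fixed} point: over $[0,t_0]$ each particle diffuses while the rate-$N$ branching repeatedly duplicates a uniformly chosen particle and removes whichever is currently furthest out, so when the swarm sits at distance $R \gg 1$ the culling preferentially removes particles on the far side and replaces them with duplicates drawn from the bulk, producing a net inward pull toward the origin. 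Making this precise — bounding the additive outward contribution of the free diffusion against the multiplicative inward correction from the origin-centric removals — is the step I expect to be the main obstacle, since it requires controlling the interaction between the jump/selection dynamics and the unbounded diffusion, and it is presumably here that the hypothesis ``$N$ sufficiently large'' enters.

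Next I would verify the minorization condition on the sublevel set $C = \{\chi : V(\chi) \le L\}$, which is compact. The claim is that there exist a probability measure $\nu$ and $\alpha > 0$ with $\mathbb{P}_\chi(\Phi_1 \in \cdot) \ge \alpha\, \nu(\cdot)$ for all $\chi \in C$. To prove it I would exhibit an explicit positive-probability event under which, by time $t_0$, the configuration has an absolutely continuous law with a density bounded below on a fixed open set: for instance, force a burst of branchings of the innermost particle so that the swarm collapses near a common point, followed by free diffusion, and then use the continuity and strict positivity of the Gaussian transition density together with the compactness of $C$ to extract a lower bound uniform over $\chi \in C$. This argument is essentially routine but needs care because of the jump structure.

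With the drift condition and the small-set minorization in hand, Harris' theorem yields a unique invariant probability measure $\pi^{(N)}$ and a bound $\|\mathbb{P}_\chi(\Phi_n \in \cdot) - \pi^{(N)}\|_{TV} \le C\, V(\chi)\, r^n$ for some $r \in (0,1)$ and every $\chi$ (as $V$ is finite everywhere). Letting $n \to \infty$ and interpolating over $t \in [n t_0, (n+1)t_0]$ gives the stated total-variation convergence from an arbitrary initial condition, and uniqueness of $\pi^{(N)}$ is immediate from Harris' theorem. An alternative to going through Harris' theorem would be a direct coupling argument — running two copies from different initial data, driving both swarms into $C$ via the drift condition and then coalescing them on the minorization event — which delivers the same conclusion and may read more cleanly, but the Lyapunov drift estimate remains the crux either way.
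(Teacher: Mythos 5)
Your overall framework (discrete skeleton, Harris-type recurrence plus a small-set minorization, then upgrading skeleton convergence to continuous time and to uniqueness for the flow) is the same family of argument as the one behind this theorem: the paper itself does not prove Theorem \ref{bees_time_convergence} but quotes it from \cite{swarm_limit}, and both that proof and the paper's own analogue with drift (Theorem \ref{subcritical_stationarity}, via Lemma \ref{recurrent_Harris_chain}) run exactly through a positive recurrent, strongly aperiodic Harris chain for the skeleton, with a minorization on a compact set proved just as you suggest (condition on no branching on $[0,t_1]$ and bound the Gaussian density below on $\Lambda=[-1,1]^N$). So the skeleton step and the minorization step of your proposal are sound and match the paper's machinery.

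The genuine gap is the drift step: the geometric Foster--Lyapunov inequality $\mathbb{E}_\chi[V(\Phi_1)] \le \gamma V(\chi) + K$ with $V(\chi)=\sum_i |x_i|^2$ and $\gamma<1$ is not just hard, it is false. The selection rule produces an inward pull whose size is governed by the spread of the cloud, not by its distance to the origin, so the restoring force is bounded rather than proportional to $R$. Concretely, start all $N$ particles at a single point at distance $R$: every particle alive at time $t_0$ sits at the endpoint of an ancestral path consisting of Brownian segments of total duration $t_0$, and coupling with the free (selection-free) BBM shows the maximal displacement $M$ of any particle over $[0,t_0]$ has moments bounded by a constant $C(N,t_0)$ independent of $R$. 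Hence $\mathbb{E}_\chi[V(\Phi_1)] \ge N\,\mathbb{E}\left[(R-M)_+^2\right] \ge V(\chi) - C'\sqrt{V(\chi)}$, which contradicts $\mathbb{E}_\chi[V(\Phi_1)]\le\gamma V(\chi)+K$ for large $V(\chi)$. Indeed, the central theme of this dissertation is that far from the origin the bees behave like an N-BBM travelling at the \emph{constant} speed $v_N$, so the available drift is additive, not multiplicative. Since the theorem claims no rate, the repair is to prove Foster's positive-recurrence condition instead, e.g.\ $\mathbb{E}_\chi[W(\Phi_1)] \le W(\chi) - \delta$ outside a compact set for $W(\chi)=\max_i|x_i|$ (or use $e^{\epsilon\max_i|x_i|}$ if one wants geometric rates); but establishing that constant inward speed is precisely the hard content, which the paper obtains in $d=1$ by coupling with a kill-from-the-outside N-BBM (Lemma \ref{coupling_rightmost}, Theorem \ref{N-BBM_Hitting_Time}, Proposition \ref{hitting_time_finite}), and which \cite{swarm_limit} obtains for large $N$ by comparison with BBM killed at a deterministic radius. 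You correctly flag this step as the main obstacle, but the inequality you propose to prove there cannot hold, and with it the Meyn--Tweedie conclusion you invoke (weighted-norm geometric ergodicity) is not available in the form stated.
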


This theorem tells us that the Brownian Bees are recurrent and have a unique stationary distribution, to which they converge as $t \rightarrow \infty$. In this section we shall apply a drift $\mu$ to the Brownian Bees, and show that for $\mu$ smaller than the threshold $v_N$ a very similar result to \ref{bees_time_convergence} holds, whereas for $\mu$ larger than this threshold, we have transience, and the system behaves more like an N-BBM.

The main new results of this dissertation are then:

\begin{theorem}
\label{Brownian_Bees_supercritical}
Let $Y^{(N, \mu)} = \left( Y_1(t), \ldots, Y_N(t) \right) \in \mathbb{R}^N$ be a one dimensional Brownian Bee system with drift $\mu$ $\in$ $\mathbb{R}$, ordered such that at each time $t$, $Y_1(t) \leq \ldots \leq Y_N(t)$. Then if $|\mu| > v_N$, where $v_N$ is the asymptotic speed of an N-BBM (\ref{speed_linear_score}), then it holds that:
\begin{equation}
    \lim_{t \rightarrow \infty}\frac{Y_1(t)}{t} = \left( 1 - \frac{v_N}{|\mu|}\right)\mu
\end{equation}
\end{theorem}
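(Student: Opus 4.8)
The plan is to reduce to the already-understood N-BBM and to exploit the fact that, once the whole swarm lies to the right of the origin, the Brownian Bee selection rule coincides \emph{exactly} with killing from the right. By the reflection symmetry $x \mapsto -x$ (which sends a Bee system with drift $\mu$ to one with drift $-\mu$ and interchanges $Y_1 \leftrightarrow -Y_N$), it suffices to treat $\mu > v_N > 0$ and to show $Y_1(t)/t \to \mu - v_N$, since $\mu - v_N = (1 - v_N/\mu)\mu = (1 - v_N/|\mu|)\mu$. The key structural observation is that the score $s(x) = -|x|$ makes the removed particle the one furthest from the origin, which is the \emph{rightmost} particle precisely when $Y_1(t) + Y_N(t) \geq 0$ and the \emph{leftmost} particle when $Y_1(t) + Y_N(t) < 0$. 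Thus the Bee process always agrees instantaneously with either an N-BBM killed from the right (drift $\mu$) or one killed from the left (drift $\mu$). I would also record the speed of the relevant comparison process: applying the reflection and a deterministic shift by $\mu t$ to Theorem \ref{speed_linear_score}, an N-BBM with killing from the right and drift $\mu$ satisfies $W_1(t)/t \to \mu - v_N$ and $W_N(t)/t \to \mu - v_N$, the diameter remaining sublinear.

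First I would establish transience: the swarm escapes to $+\infty$ and, after some a.s. finite random time, never again touches $\{Y_1 \le 0\}$. Heuristically this is clear because in \emph{both} regimes the net cloud speed is positive for $\mu > v_N$ — it is $\mu - v_N$ when $Y_1 + Y_N \ge 0$ (kill-right) and the larger $\mu + v_N$ when $Y_1 + Y_N < 0$ (kill-left) — so the swarm is pushed rightward regardless of its location. To make this rigorous I would argue in two stages. (a) From any configuration the swarm reaches an ``all positive and far to the right'' state in finite time almost surely; this I would obtain by a geometric Borel--Cantelli estimate of the type used in the proof of Theorem \ref{N-BBM_Hitting_Time} (with positive probability per unit time every particle drifts right past a prescribed level). (b) Once the swarm is entirely positive it evolves exactly as the N-BBM $W$ killed from the right, and since $W_1(t)/t \to \mu - v_N > 0$ the process $W$ is transient to $+\infty$; hence from a sufficiently advanced all-positive state the probability of never returning to $\{Y_1 \le 0\}$ is bounded below by some $p > 0$. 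Combining (a) and (b) through a renewal/Borel--Cantelli argument over successive visits to the far-positive state yields an a.s. finite time $\sigma$ with $Y_1(t) > 0$ for all $t \ge \sigma$.

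With $\sigma$ in hand the conclusion is immediate. For $t \ge \sigma$ all particles are positive, so the furthest-from-origin particle is the rightmost one and the Bee dynamics coincides with an N-BBM killed from the right and drift $\mu$, started from $Y(\sigma)$. By the strong Markov property at the stopping time $\sigma$ and the speed statement recorded above, $Y_1(t)/t \to \mu - v_N$ almost surely. Unwinding the reflection then gives the claimed formula for all $|\mu| > v_N$.

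The main obstacle is the transience step, i.e. proving rigorously that the swarm eventually stays to the right of the origin rather than merely having positive average speed. The subtlety is ruling out infinitely many excursions of $Y_1$ below $0$: positivity of the drift in each regime gives the right intuition, but one cannot invoke the a.s. speed limit directly (that is precisely what is being proved), so I would instead bootstrap from the qualitative transience of the comparison N-BBM $W$ together with the geometric hitting-time estimates. Controlling the coupling during the kill-left excursions — where $Y$ and $W$ temporarily disagree — is the delicate point, and I expect Lemma \ref{N_coupling} and the hitting-time control of Theorem \ref{N-BBM_Hitting_Time} to be the natural tools for keeping the two processes comparable.
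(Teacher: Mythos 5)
Your overall strategy --- show the swarm eventually lies entirely to the right of the origin, after which the Bee dynamics coincides with an N-BBM killed from the right, then invoke the speed theorem --- is the same as the paper's. But the transience step, which you correctly identify as the main obstacle, is a genuine gap, and the tools you point to will not fill it. Your step (a) rests on a geometric Borel--Cantelli estimate, which requires a lower bound, \emph{uniform over configurations}, on the conditional probability of reaching an ``all positive and far to the right'' state within one unit of time. No such uniform bound exists: before transience is established, nothing prevents the swarm from being arbitrarily far to the left at the start of a given unit interval, and the probability of crossing to the far-positive side within bounded time then degenerates to zero. (This is unlike the events used in the proofs of Theorem \ref{N-BBM_Hitting_Time} and Proposition \ref{hitting_time_finite}, which involve only \emph{relative} displacements --- branching $N$ times, Brownian increments of size one --- and hence have configuration-independent probabilities.) Likewise, Lemma \ref{N_coupling} compares two N-BBMs with the \emph{same} killing rule and ordered initial data; it says nothing about a Bee process that switches between kill-left and kill-right regimes, and Theorem \ref{N-BBM_Hitting_Time} is stated only for subcritical drift $0 \le \mu < v_N$. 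So the ``delicate point'' you flag --- controlling the kill-left and mixed-regime excursions --- is precisely what remains unproved, and iterating excursion estimates without a global comparison does not obviously close.

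The missing idea is the paper's Lemma \ref{coupling_rightmost}: there is a \emph{single} coupling, valid for all time and regardless of which regime the Bee process is in, under which a kill-from-the-right N-BBM $X^{(N)}$ started from the same configuration satisfies $X_n(t) \le Y_n(t) - \mu t$ for every $n$ and $t$. (Intuitively, always killing the rightmost particle is the worst possible rule for rightward motion; the proof is a particle-by-particle relabelling argument at each branch time.) With this, Theorem \ref{speed_linear_score} immediately gives $\liminf_{t\to\infty}\left(Y_1(t) - \mu t\right)/t \ge -v_N > -\mu$, hence $Y_1(t) \to +\infty$ and the swarm is eventually all positive --- no excursion bookkeeping at all. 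A second, more minor, hole: your time $\sigma$ (``after which $Y_1$ never returns to $0$'') is a last-exit-type time, not a stopping time, so you cannot apply the strong Markov property at it as written; the paper instead fixes $\epsilon > 0$, takes a deterministic $K$ with $\mathbb{P}(T \le K) \ge 1 - \epsilon$, and compares with an auxiliary process that switches to the kill-from-right rule at time $K$, agreeing with the Bee process on an event of probability at least $1-\epsilon$. This part is fixable within your framework (e.g.\ by working with the stopping times of successive visits to a far-right state and taking a countable union over them), but the transience gap above is structural.
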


Theorem \ref{Brownian_Bees_supercritical} tells us that if the drift $\mu$ is larger than the criticality $v_N$ then the Brownian Bee system is transient, and gives us an asymptotic of its escaping velocity. We contrast this with Theorem \ref{subcritical_stationarity} which tells us the system is recurrent if $|\mu| < v_N$, and converges as $t \rightarrow \infty$:

\begin{theorem}
\label{subcritical_stationarity}
Let $Y^{(N, \mu)} = \left( Y_1(t), \ldots Y_N(t)\right)$ be a one dimensional Brownian Bee system with drift $\mu \in \mathbb{R}$ such that $|\mu| < v_N$ where $v_N$ is asymptotic speed of an N-BBM.
Then there is a unique stationary measure $\pi^{(N, \mu)}$ on $\mathbb{R}^N$ so that for any $\chi \in \mathbb{R}^N$, under $\mathbb{P}_{\chi}$ the law of $Y^{(N, \mu)}$ converges in total variation norm to $\pi^{(N, \mu)}$ as $t \rightarrow \infty$:
$$\lim_{t \rightarrow \infty} \sup_{C} \left| \mathbb{P}_{\chi}\left(Y^{(N, \mu)} \in C \right) - \pi^{(N, \mu)}(C) \right| = 0$$ 
where the supremum is over all Borel measurable $C \in \mathbb{R}^N$

\end{theorem}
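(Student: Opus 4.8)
The plan is to establish the two ingredients that together yield a unique invariant law and total-variation convergence via the Harris ergodic theorem: (i) a positive-recurrence estimate showing the swarm returns to a fixed compact set in expected time controlled linearly by its distance from the origin, and (ii) a minorisation (Doeblin) condition on that compact set. By the reflection symmetry $x \mapsto -x$ we may assume $\mu \ge 0$ throughout, so that $0 \le \mu < v_N$ and Theorem~\ref{N-BBM_Hitting_Time} is directly applicable. Write $V(y) := \max_{1 \le i \le N} |y_i|$ for the Lyapunov function measuring how far the swarm has strayed from the origin.

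First I would set up the comparison with the N-BBM that drives the recurrence. The crucial observation is that while the whole swarm lies strictly to the right of the origin, the Brownian Bee selection rule (remove the particle furthest from the origin) removes the \emph{rightmost} particle, so on such a time interval the process is \emph{exactly} a standard N-BBM with killing from the right run in the drift frame. Concretely, passing to $W_i(t) := Y_i(t) - \mu t$ and reflecting through the origin turns these dynamics, up to the first time any particle reaches $0$, into those of a standard N-BBM with killing from the left; since that first time is precisely the first time the leftmost particle $Y_1$ reaches $0$, the identification is an equality in law on the whole interval of interest. Applying Theorem~\ref{N-BBM_Hitting_Time} (and using $X_N \ge X_1$ to pass from the leftmost to the extremal particle where needed) then bounds the expected time for $Y_1$ to descend to the origin by $\alpha + \beta\, Y_1(0)$, linear in the starting height, with the symmetric statement when the swarm lies entirely to the left. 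These one-sided estimates show that the expected return time to the compact set $K := \{V \le M\}$ is bounded by $\alpha' + \beta' V(\chi)$, giving Harris recurrence; it is here that $\mu < v_N$ is essential, since it is exactly the condition under which the killed N-BBM outruns the drift and pulls the extremal particle back toward the origin.

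Next I would establish the minorisation on $K$: there exist $t_0 > 0$, $\delta > 0$ and a probability measure $\nu$ on $\mathbb{R}^N$ with $\mathbb{P}_\chi(Y(t_0) \in \cdot\,) \ge \delta\, \nu(\cdot)$ for all $\chi \in K$. This is a purely local statement, essentially unchanged from the zero-drift analysis of \cite{swarm_limit}: from any configuration in $K$ there is probability bounded below (uniformly over $K$) of a prescribed sequence of branchings collapsing the swarm near a reference point, after which the driving Brownian motions spread the particles with a density bounded below on a small ball; the drift merely shifts this picture by the bounded amount $\mu t_0$ and does not affect the uniform lower bound. With the return-time bound and the minorisation in hand, the Harris theorem yields a unique invariant probability measure $\pi^{(N,\mu)}$ together with convergence of $\mathbb{P}_\chi\big(Y^{(N,\mu)}(t) \in \cdot\,\big)$ to $\pi^{(N,\mu)}$ in total variation for every $\chi$, which is the claim.

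The main obstacle is the recurrence estimate in the regime where the swarm straddles the origin, for there the selection rule is neither ``kill the rightmost'' nor ``kill the leftmost'' and the clean identification with an N-BBM breaks down. The plan to overcome this is to use the one-sided comparisons only to control the extremal coordinates $Y_1$ and $Y_N$ separately---each valid on the side where the relevant extreme is far from the origin---and to combine them with the sublinearity of the diameter from Theorem~\ref{speed_linear_score}, so that whenever $V(\chi)$ is large at least one extreme is pulled a definite distance toward the origin in expected time controlled by Theorem~\ref{N-BBM_Hitting_Time}, yielding a net decrease of $V$. Verifying that this produces a genuine contraction, rather than the swarm merely translating while $V$ stays large, is the delicate point of the argument.
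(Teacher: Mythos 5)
Your overall architecture---Harris recurrence plus a minorisation on a compact set, with the one-sided regime handled by identifying the Bees process with an N-BBM killed from the extreme side and invoking Theorem \ref{N-BBM_Hitting_Time}---is the same as the paper's (the paper verifies the Harris-chain conditions for the sampled chain $\left(Y^{(N,\mu)}(t_1 n)\right)_{n \geq 0}$, proving the minorisation exactly as you do, by conditioning on no branching). However, your proposal has a genuine gap at what is in fact the heart of the matter: configurations that straddle the origin. In the paper this case is the entire content of Proposition \ref{hitting_time_finite} and its proof, and you explicitly leave it unresolved (``verifying that this produces a genuine contraction \ldots is the delicate point'').

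Moreover, the plan you sketch for closing that gap would not work as stated. When the swarm straddles the origin, the killing rule depends on the global configuration, and the available couplings (Lemma \ref{coupling_rightmost} and its mirror image) sandwich the Bees between a kill-from-the-right N-BBM below and a kill-from-the-left N-BBM above, $X_n(t) \leq Y_n(t) - \mu t \leq X'_n(t)$. These are \emph{outward} envelopes, spreading at linear rates $\mu - v_N$ and $\mu + v_N$: they bound how fast the swarm can escape but exert no inward pull on $Y_1$ or $Y_N$, because the killing may occur entirely on the opposite side while the extreme you wish to control is replenished by branching. Likewise, the diameter statement in Theorem \ref{speed_linear_score} is an asymptotic statement about N-BBM, not a quantitative bound for the straddling Bees process, so it cannot certify that a configuration with large $V$ is effectively one-sided; the bad configurations have $Y_1 \approx -V$ and $Y_N \approx +V$, i.e.\ diameter comparable to $V$. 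The paper resolves this by a different mechanism: from any configuration, with probability bounded below on each unit time interval, the particle nearest the origin either hits $0$ or moves a unit distance and then branches $N$ times so that its descendants take over the whole population, forcing one-sidedness; geometric trials (Lemma \ref{stopping_time_expectation}) together with radius control via free-BBM couplings (Lemma \ref{radius_bound}) then give $\mathbb{E}[\tau] \leq \alpha + \beta R_0$, after which the one-sided argument applies. Two further details to repair if you pursue your route: the one-sided bound from Theorem \ref{N-BBM_Hitting_Time} is linear in the radius $R_0$ of a box containing \emph{all} particles (the theorem starts all particles at a single point, so the comparison process must be started at the furthest particle), not in $Y_1(0)$; and a particle hitting $0$ does not by itself place the swarm in a compact set---other particles may still be arbitrarily far away---so an additional argument (in the paper, the ``takeover'' construction with the times $S_n$ and Lemma \ref{radius_bound}) is needed before the minorisation can be invoked.
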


A key result in proving this theorem is being able to show that for any initial condition, the particle system will return to 0, and furthermore it will do so in a time of bounded expectation. The method of proof for this is by comparison with an N-BBM, and the use of Theorem \ref{N-BBM_Hitting_Time}, since if all particles are on one side of $0$, the left/right most particles will always be the ones killed, allowing us to couple the Brownian Bees to a N-BBM.

\begin{proposition}
\label{hitting_time_finite}
Let $Y^{(N, \mu)} = \left( Y_1(t), \ldots Y_N(t)\right)$ be a one dimensional Brownian Bee system with drift $\mu \in \mathbb{R}$ such that $|\mu| < v_N$ where $v_N$ is the asymptotic speed of an N-BBM.

Then for any initial condition, for

$$\tau := \inf \left\{t > 0: Y_k(t) = 0 \text{, for some } k \in \{1, \ldots, N\} \right\}$$

It holds that $\tau$ is almost surely finite.
Moreover, if additionally $Y^{(N, \mu)}(0) \in [-R_0, R_0]^N$ has all particles within ${R_0}$ from the origin, then there are deterministic constants $\alpha = \alpha_\mu$ and $\beta = \beta_\mu$ not depending on ${R_0}$ such that 
$$\mathbb{E}[\tau] \leq \alpha + \beta {R_0}$$
\end{proposition}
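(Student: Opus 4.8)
The plan is to reduce every case to the one-sided situation, in which the Brownian Bee coincides with an $N$-BBM, and then to quote Theorem \ref{N-BBM_Hitting_Time}. First I would apply the reflection $x \mapsto -x$, which sends a drift-$\mu$ Bee to a drift-$(-\mu)$ Bee and preserves both the event $\{\text{some } Y_k = 0\}$ and the box $[-R_0,R_0]^N$, so without loss of generality $\mu \ge 0$. The structural observation driving the proof is that while all particles lie strictly on one side of the origin, the particle furthest from the origin is just the extremal particle on that side: if all particles are positive the Bee kills $Y_N$, and if all are negative it kills $Y_1$. Hence, up to the first zero-crossing, a one-sided Bee can be coupled exactly to an $N$-BBM with killing from the right (resp.\ left) and drift $\mu$.

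The heart of the argument is the all-positive case $Y(0) \in (0,R_0]^N$, which is also where the drift is unfavourable. Here $\tau = \inf\{t : Y_1(t) = 0\}$, and on $[0,\tau)$ the Bee equals an $N$-BBM $\bar Y$ with killing from the right and drift $\mu$. I would reach the process of Theorem \ref{N-BBM_Hitting_Time} in four moves: (i) reflect, $W_k := -\bar Y_{N+1-k}$, giving a killing-from-the-left $N$-BBM with drift $-\mu$ for which $\tau$ is the first time the rightmost particle $W_N$ reaches $0$ from below; (ii) remove the drift by setting $X_k := W_k + \mu t$, a standard driftless killing-from-the-left $N$-BBM with $\tau = \inf\{t : X_N(t) - \mu t \ge 0\}$; (iii) bound using $X_N \ge X_1$, so $\tau \le \inf\{t : X_1(t) - \mu t \ge 0\}$; and (iv) use translation invariance together with the same-$N$ monotone coupling of Lemma \ref{N_coupling} to compare $X$, started in $[-R_0,0)^N$, with the copy started from all of $-R_0$, which is a shift of the origin-started process in Theorem \ref{N-BBM_Hitting_Time}. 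This chain gives $\tau \le \tau_{R_0,\mu}$ in the coupling, hence $\mathbb{E}[\tau] \le \alpha + \beta R_0$. The all-negative case is identical and in fact easier, the drift now being favourable.

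The remaining and genuinely harder case is a straddling configuration $Y_1(0) < 0 < Y_N(0)$ (within the box for the quantitative bound, arbitrary for almost-sure finiteness). My plan is to run until the first time $\theta$ at which the process either hits $0$ or becomes one-sided, and then apply the one-sided estimate from the configuration reached at $\theta$ via the strong Markov property. For almost-sure finiteness this works once perpetual straddling is excluded: between branchings each particle moves continuously, so a sign can change only by a crossing (that is, by achieving $\tau$) or by a selection event removing the last particle of a side; and once all-positive the Bee equals the killing-from-the-right $N$-BBM, whose leftmost particle tends to $-\infty$ because $\mu < v_N$, forcing a crossing. To rule out straddling for all time I would run a conditional Borel–Cantelli / ``good event'' argument of the type used earlier for the times $T_1^\epsilon$: over unit windows the particle closest to $0$ has probability bounded below of excursing to $0$ before the next selection event, and since selection together with $\mu \ge 0$ prevents the negative particles from escaping to $-\infty$, the closest distance cannot diverge, so a crossing occurs almost surely.

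I expect the main obstacle to be the quantitative straddling bound: showing that the time $\theta$ to become one-sided and the spread $\max_k |Y_k(\theta)|$ of the cloud at that moment are both $O(R_0)$, so that the one-sided estimate can be applied without losing the linear scaling. The sandwich of the Bee between a killing-from-the-right and a killing-from-the-left $N$-BBM (from the coordinatewise coupling of Lemma \ref{N_coupling}) is too loose here, since the two bounding clouds drift in opposite directions and do not force a crossing; the linear control must instead come from selection eliminating the minority side ballistically. I would therefore establish $\mathbb{E}[\theta] = O(R_0)$ and $\mathbb{E}[\max_k |Y_k(\theta)|] = O(R_0)$ through drift and displacement moment estimates, and then combine with the one-sided bound. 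This selection-driven linear control of the straddling phase is the crux of the whole argument.
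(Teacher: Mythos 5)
Your reduction to the one-sided case is correct and matches the paper's own first step (its Lemma \ref{hit_if_one_side}): on the event that all particles are on one side of the origin, the Bee coincides with an $N$-BBM killed at the extremal particle, and the reflection/drift-removal/translation chain you describe, using Lemma \ref{N_coupling} and Theorem \ref{N-BBM_Hitting_Time}, gives exactly $\mathbb{E}[\tau]\le\alpha+\beta R_0$ there. The genuine gap is in the straddling phase, in both of its halves. For almost-sure finiteness, your Borel--Cantelli argument needs the event ``the closest particle excurses to $0$ within a unit window'' to have probability bounded below \emph{uniformly over configurations}, but that probability decays like a Gaussian tail in the distance of the closest particle to the origin; you patch this with the claim that ``selection together with $\mu\ge 0$ prevents the closest distance from diverging,'' but that claim is unproven and is essentially the whole difficulty --- selection kills the \emph{globally} furthest particle, so when the positive cloud is the further one the negative particles evolve as a free BBM with drift and their leftmost member moves at speed $-\sqrt{2}+\mu<0$, so no soft monotonicity argument rules out both sides escaping. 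For the quantitative bound you correctly observe that the two-sided sandwich is too loose, but what you offer in its place ($\mathbb{E}[\theta]=O(R_0)$ and spread $O(R_0)$ via ``selection eliminating the minority side ballistically'') is a heuristic target, not an argument; selection does not target the minority side, and no estimate is given.

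The paper closes both gaps with one device you are missing: a ``good event'' whose success yields \emph{either} a crossing \emph{or} one-sidedness, rather than a crossing specifically. At each integer time $k$, the event asks that the particle closest to $0$ move \emph{toward} the origin by $1$ in $[k,k+1/2]$ (while no branching occurs and the other motions stay within $1/2$), and then that this particle branch $N$ times in $[k+1/2,k+1]$ while all Brownian increments stay within $1/(2N)$. If the closest particle was within distance $1$ of the origin it crosses, giving $\tau\le k+1$; if not, the $N$ branchings (each killing the furthest particle) collapse the whole population onto that particle's side, giving $\rho\le k+1$, where $\rho$ is your $\theta$. Crucially, this event's probability is bounded below by a constant $c>0$ \emph{independent of the configuration at time $k$}, because no particle is required to travel a distance depending on the configuration. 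Hence $\tau\wedge\rho$ has a geometric tail uniform in the initial condition (Lemma \ref{stopping_time_expectation}), with mean $O(1)$, not $O(R_0)$. The radius at that time is then controlled not by analysing selection at all but by dominating the Bee by $N$ free branching Brownian motions and invoking Lemma \ref{radius_bound} at an exponentially-tailed random time, giving expected radius $R_0+O(1)$; the strong Markov property at $\rho$ and the one-sided lemma then deliver $\mathbb{E}[\tau]\le\alpha+\beta R_0$. Without this dichotomy event (or a substitute proof that the closest distance stays tight), your straddling argument does not close.
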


\begin{section}{Proof of Theorem \ref{Brownian_Bees_supercritical}}

Before stating the proof, we first give an overview of the idea. We first consider the system $\tilde{X}_n =\left(Y_n - \mu t\right)$. Which we can view as a Brunet-Derrida system where the fitness function is changing over time, and we remove the particle that is furthest away from the point $-\mu t$. We then use a lemma that allows us to dominate this process by a "kill from the right process"; which is intuitively saying that "always killing the rightmost particle results in the system as a whole moving further to the left than if we use any other rule". This allows us to show that in the limit the process $\tilde{X}$ always stays to the right of the point $-\mu t$, and hence it becomes a "kill from the right" process.

\begin{lemma} 
\label{coupling_rightmost}
Let $Y^{(N, \mu)}= Y_1, \ldots Y_N$ be a one dimensional Brownian Bee system with drift $\mu \in \mathbb{R}$ and ordered such that $Y_1(t) \leq \ldots \leq Y_N(t)$ for all $t$. Let $X^{(N)} = (X_1, \ldots, X_N)$ be an N-BBM with killing from the right. Then if $X_1, \ldots, X_N$ and $Y_1, \ldots Y_N$ have the same initial condition, there exists a coupling $X^{(N)}$ to $Y^{(N, \mu)}$ such that $X_n(t) \leq Y_n(t) - \mu t$ for $1 \leq n \leq N$ and all time $t$.
\end{lemma}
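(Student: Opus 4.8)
The plan is to pass to the moving frame $\tilde{Y}_n(t) := Y_n(t) - \mu t$ and prove the equivalent statement $X_n(t) \le \tilde{Y}_n(t)$ for all $n$ and all $t$. In this frame each $\tilde{Y}_n$ evolves as a driftless Brownian motion between branching times, exactly as the $X_n$ do, so the only genuine difference between the two systems is the selection rule: for $X$ we always remove the rightmost particle, whereas for $\tilde{Y}$ we remove the particle maximising $|\tilde{Y}_n(t) + \mu t|$, i.e. the original $Y$-particle furthest from the origin. I would build both processes on the same probability space, driving them by a common rate-$N$ Poisson clock $(J_i)$ and common duplication ranks $(K_i)$, and on each interval $[J_{i-1}, J_i)$ assigning the same Brownian motion to the particle of rank $n$ in each system.

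First I would set up an induction over the branching times, with hypothesis that at each $J_{i-1}$ the sorted configurations satisfy $X_n(J_{i-1}) \le \tilde{Y}_n(J_{i-1})$ for all $n$. The base case is immediate, since the two systems share an initial condition and $\mu \cdot 0 = 0$. On the open interval, because rank-$n$ particles are driven by the same Brownian increment, the label-wise difference $\tilde{Y}_n(t) - X_n(t)$ stays constant and hence nonnegative; the elementary monotonicity of order statistics (if $a_n \le b_n$ for every $n$, then the $k$-th order statistics satisfy $a_{(k)} \le b_{(k)}$) then guarantees that the re-sorted configurations still satisfy $X_{(k)}(J_i^-) \le \tilde{Y}_{(k)}(J_i^-)$.

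The crux is to propagate the inequality across the branching event at $J_i$. Duplicating the common rank $K_i$ preserves the pointwise domination, so after duplication the two size-$(N+1)$ multisets admit a bijection sending each $X$-element below a $\tilde{Y}$-element; again by order-statistic monotonicity the sorted duplicated configurations are ordered componentwise. It then remains to compare the two removals, and here I would isolate the key combinatorial sub-lemma: among all ways of deleting a single element from a sorted list of $N+1$ reals, deleting the maximum produces the componentwise-smallest resulting list of length $N$. Applying this, the kill-from-right removal performed by $X$ yields exactly the bottom $N$ order statistics of its multiset, which is dominated via the componentwise ordering of the two multisets by whatever element $\tilde{Y}$ chooses to delete. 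This restores $X_n(J_i) \le \tilde{Y}_n(J_i)$ and closes the induction.

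The main obstacle is precisely this branching step: unlike in Lemma \ref{N_coupling}, the two systems use genuinely different selection rules, so the argument cannot rely on both processes deleting the same-ranked particle. The sub-lemma that removing the maximum is the "most lowering" deletion is what makes the kill-from-right process an extremal lower bound, and verifying it cleanly — together with checking that the rank-based Brownian coupling remains consistent across the relabelling at each $J_i$ — is the heart of the proof.
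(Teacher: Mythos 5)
Your proposal is correct, and its overall architecture matches the paper's: pass to the frame $\tilde{Y}_n(t) = Y_n(t) - \mu t$, build both processes from a common Poisson clock $(J_i)$, common ranks $(K_i)$, and rank-matched Brownian motions, then induct over branching times using the observation that a componentwise bound between some matching of the two configurations survives re-sorting. The only genuine divergence is how the inequality is pushed across a branching event. The paper re-interprets the single jump in the bee system (the particle furthest from the origin jumping to the duplicated position) as a cascade of moves, $\tilde{X}_N \to \tilde{X}_{K_i}$, $\tilde{X}_{N-1} \to \tilde{X}_N$, \dots, $\tilde{X}_m \to \tilde{X}_{m+1}$, so that every bee particle either stays put or moves up one rank while the N-BBM's top particle jumps to $X_{K_i}$; matching particles by pre-jump ranks then yields the permutation bound. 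You instead treat the event as duplicate-then-delete on multisets and isolate the static sub-lemma that deleting the maximum of a sorted $(N+1)$-list produces the componentwise-smallest $N$-list among all single deletions; combined with the componentwise domination of the duplicated lists (which holds because both systems duplicate the same rank $K_i$), this gives $X_{(k)}(J_i) \le \tilde{Y}_{(k)}(J_i)$ directly, since the bee deletion, whatever it removes, can only produce a list dominating the one obtained by deleting its own maximum. The two arguments establish exactly the same fact: yours is easier to verify (a one-line order-statistics check), while the paper's cascade keeps explicit track of particle identities, which is what its $\sigma$, $\sigma'$ bookkeeping rests on. To make your version complete you should write out the extremality sub-lemma and confirm that the rank-based assignment of Brownian motions and clocks preserves the marginal laws of both processes (the paper also only asserts this); neither point is a gap, just a detail to record.
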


\begin{proof}
Let $\tilde{X}_n = Y_n(t) - \mu t$ and $\tilde{X}^{(N, \mu)}(t) = \left(\tilde{X}_1, \ldots, \tilde{X}_N \right)$.

A trivial but key observation is that if $\sigma$, $\sigma' : \{1, \ldots, N \} \rightarrow \{1, \ldots N \}$ are any two permutations, then it is sufficient to show that $X_{\sigma(n)}(t) \leq Y_{\sigma'(n)}(t)$ for $1 \leq n \leq N$, and from this we can deduce that $X_n(t) \leq Y_n(t)$.

Now to construct the processes, let $\left( J_i \right)_{i \geq 0}$ be the jump times of a Poisson process of rate $N$ with $J_0 = 0$; $Z_n^i$ be independent Brownian motions and $\left( K_i\right)_{i \geq 1}$ be an independent sequence of uniform random variables on $\{ 1, \ldots N \}$. Then letting $i \geq 1$ we inductively assume that the processes have been constructed up to time $J_{i-1}$.

For $t \in [J_{i-1}, J_i)$, we define 
$$X_n(t) = X_n(J_{i-1}) + Z_n^i(t-J_{i-1})$$
and $$\tilde{X}_n(t) = \tilde{X}_n(J_{i-1}) + Z_n^i(t-J_{i-1})$$

We then relabel $X^{(N)}$ and $\tilde{X}^{(N, \mu)}$ so that they are ordered by position, and then by the remark above and the inductive assumption that $X_n(J_{i-1}) \leq \tilde{X}_n(J_{i-1})$ we deduce that $X_n(t) \leq \tilde{X}_n(t)$ for $t \in [J_{i-1}, J_i)$.

Then at time $J_i$ we delete one particle and duplicate another, with the net result that we are moving one particle to the position of another. We can then try and keep track of which particle moved where, which is a slight shift in perspective as generally we only cared about the total process as opposed to the individual movements of the particles. 

For the process $X^{(N)}$ we want to move the particle $X_N(J_i -)$ to $X_{K_i}(J_i -)$, since we move the furthest right particle. Then for the process $\tilde{X}^{(N, \mu)}$ we move the particle $\tilde{X}_m(J_i -)$ to $\tilde{X}_{K_i}(J_i -)$ where $m$ maximises $|\tilde{X}_m(J_i -) + \mu J_i |$. The idea is that we can then view this move as:
\begin{enumerate}
    \item Move $\tilde{X}_N(J_i -)$ to $\tilde{X}_{K_i}(J_i -)$
    \item Move $\tilde{X}_{N-1}(J_i -)$ to $\tilde{X}_N(J_i -)$
    \item $\ldots$
    \item Move $\tilde{X}_m(J_i -)$ to $\tilde{X}_{m+1}(J_i -)$
\end{enumerate}

\begin{figure}[h]
\includegraphics[width=\textwidth]{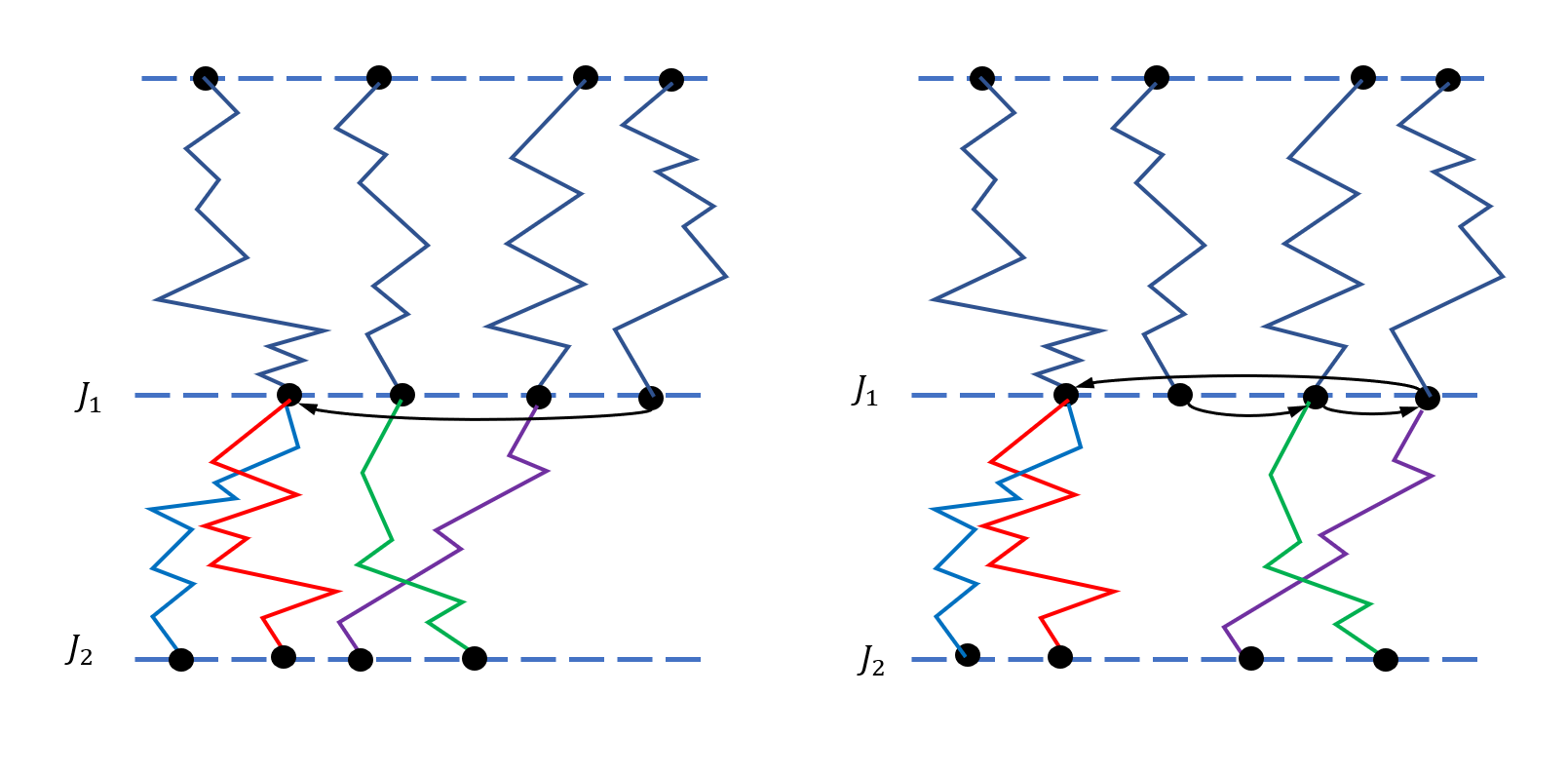}
\caption{\emph{This diagram shows the coupling process. On the left we have the process $X^{(N)}$ and on the right $\tilde{X}^{(N)}$. At time $J_1$ the leftmost particle branches. Then the rightmost particle of $X^{(N)}$ moves to this particle. For $\tilde{X}^{(N)}$, the second particle from the left moves to the leftmost particle, but we view this move as shown by the arrows, and then couple the corresponding Brownian motions so that $\tilde{X}^{(N)}$ lies to the right of $X^{(N)}$ up to time $J_2$}}
\end{figure}

By then assigning each particle at time $t = J_i$ an index according to their order \emph{before} the move, in addition to the ordering $X_1(J_i), \ldots, X_n(J_i)$ then we get permutations $\sigma$ (for $X$) and $\sigma'$ (for $\tilde{X}$). So $\sigma(n)$ is the index of particle just before time $J_i$ before it jumped to $X_n(J_i)$ and similarly for $\sigma'$ and $\tilde{X}_n$ (and with arbitrary choice in case of a tie). We then see that $X_{\sigma(n)}(J_i) \leq \tilde{X}_{\sigma'(n)}(J_i)$, since before the jumps we have $X_n(J_i -) \leq \tilde{X}_n(J_i -)$, for $1 \leq n \leq N$ and:
\begin{itemize}
    \item The particles $\sigma(N)$, $\sigma'(N)$ for $X$ and $\tilde{X}$ have moved to $X_{K_i}(J_i -)$ and $\tilde{X}_{K_i}(J_i -)$ respectively, hence $X_{\sigma(n)}(J_i) = X_{K_i}(J_i -) \leq \tilde{X}_{K_i}(J_i -) = \tilde{X}_{\sigma'(n)}(J_i)$
    \item The particle $\sigma(n)$ for $X$ doesn't move for $1 \leq n \leq N-1$, and so is at position $X_{n}(J_i)$. And the particle $\sigma'(n)$ has either moved down one particle or stayed in the same place for $1 \leq n \leq N-1$, i.e. $\tilde{X}_{\sigma'(n)}(J_i) = \tilde{X}_n(J_i -)$ or $\tilde{X}_{\sigma'(n)}(J_i) = \tilde{X}_{n+1}(J_i -)$. 
    
    However $X_{\sigma(n)}(J_i) = X_n(J_i - ) \leq \tilde{X}_n(J_i -) \leq \tilde{X}_{n+1}(J_i -)$
\end{itemize}

We then see that we have constructed the processes $X^{(N)}$ and $\tilde{X}^{(N, \mu)}$ up to and including time $J_i$, and hence proceeding by induction we have constructed the processes for all time $t$ whilst retaining the monotone property. We can also check that the laws of the processes are correct.
\end{proof}

\begin{proof}[Proof of Theorem \ref{Brownian_Bees_supercritical}:]
We assume without loss of generality that $\mu > 0$ (since the whole system is symmetrical).

Then, let $Y^{(N, \mu)} = \left( Y^{\mu}_1(t), \ldots, Y^{\mu}_N(t) \right) \in  \mathbb{R}^N$ be a Brownian Bee system with drift $\mu$, ordered such that at each time $t$, $Y_1^{\mu}(t) \leq \ldots \leq Y_N^{\mu}(t)$.

Then by Lemma \ref{coupling_rightmost}, we may couple an N-BBM $X^{(N)}$ with with killing from the right to $Y^{(N, \mu)}$ such that $X_n(t) \leq Y_n(t) - \mu t$. As in the proof above, we will denote $\tilde{X}_n = Y_n(t) - \mu t$ and $\tilde{X}^{(N, \mu)}(t) = \left(\tilde{X}_1, \ldots, \tilde{X}_N \right)$

Then, by Theorem \ref{speed_linear_score} we have that 
$$\lim_{t \rightarrow \infty} \frac{X_1(t)}{t} \rightarrow -v_N$$
almost surely,

Hence,
\begin{equation}
\label{tilde_X_lower_bound}
\liminf_{t \rightarrow \infty} \frac{\tilde{X}_1(t)}{t} \geq \lim_{t \rightarrow \infty} \frac{X_1(t)}{t} = - v_N
\end{equation}

The point now, is that the particle that is killed at each reproduction in the process $\tilde{X}^{(N, \mu)}(t)$ is determined by the largest value of $|\tilde{X}_m(t) + \mu t|$, so if we let $T$ be the random time such that for all $t \geq T$,
$$\frac{\tilde{X}_1(t)}{t} \geq -\frac{\mu + v_N}{2}$$
Then by Equation \ref{tilde_X_lower_bound}, $T$ is almost surely finite (since by assumption $\mu > v_N$), and for $t > T$, the killing rule becomes "kill from the right", since for such $t$, $\tilde{X}_m(t) + \mu t$ is always a positive quantity.

So if we fix any $\epsilon > 0$, then we can take a deterministic number $K \in \mathbb{R}$ such that $\mathbb{P}(T \leq K) \geq 1 - \epsilon$. We then construct a process such that it follows the rule of $\tilde{X}$ up to time $K$, and then after that it follows the "kill from the right" rule. Let us call this process $Z^{(N, \mu, K)}(t) = (Z_1(t), \ldots Z_n(t))$, and couple it to $\tilde{X}$ such that we are using the same Brownian motions for both processes. Then for times $t > K$ we can view the process $Z^{(N, \mu, K)}(t)$ as an N-BBM started from the initial conditions of $\left( \tilde{X}_1(K), \ldots, \tilde{X}_N(K) \right)$, and hence by Theorem \ref{speed_linear_score},

\begin{equation}
    \lim_{t \rightarrow \infty} \frac{Z_1(t)}{t} \rightarrow -v_N
\end{equation}
almost surely,

But with probability at least $1 - \epsilon$ we have that $T \leq K$, and then in this case we have that the processes $\tilde{X}^{(N, \mu)}$ and $Z^{(N, \mu, K)}$ are identical. Hence,
\begin{equation}
    \mathbb{P} \left( \lim_{t \rightarrow \infty}\frac{\tilde{X}_1(t)}{t} = \lim_{t \rightarrow \infty}\frac{Z_1(t)}{t} = -v_N \right) \geq 1 - \epsilon
\end{equation}

And as $\epsilon$ is arbitrary, we conclude that $\frac{\tilde{X}_1(t)}{t} \rightarrow -v_N$ a.s. 

Moreover,
$$\frac{Y_1(t)}{t} \rightarrow \mu - v_N$$ a.s.
\end{proof}

\end{section}

\begin{section}{Proof of Theorem \ref{subcritical_stationarity}}
The proof of this theorem is adapted from the proof of Proposition 6.5 and Theorem 1.2 in \cite{swarm_limit}. Throughout the entirety of this section we assume that $Y^{(N, \mu)}$ is our Brownian Bees with drift process and that the drift $\mu$ is such that $|\mu| < v_N$.
\begin{lemma}
\label{recurrent_Harris_chain}
For any $t_1 \in (0, 1)$, the Markov chain $\left(Y^{(N, \mu)}(t_1 n) \right)_{n=0}^{\infty}$ is a positive recurrent strongly aperiodic Harris chain.
\end{lemma}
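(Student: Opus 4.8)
The plan is to verify the three defining ingredients of a positive recurrent, strongly aperiodic Harris chain for the $t_1$-skeleton, whose transition kernel I write $P_{t_1}$: a one-step minorization on a suitable regeneration set (which simultaneously gives strong aperiodicity and irreducibility), accessibility of that set from every starting configuration, and a uniform bound on the expected return time to it. Throughout I would take the regeneration set to be the compact box $A = B = \{ y \in \mathbb{R}^N : y_1 \le \cdots \le y_N, \ |y_i| \le R \ \forall i \}$ for a fixed $R > 0$.

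First I would establish the minorization $P_{t_1}(x, \cdot) \ge \varepsilon \rho(\cdot)$ for all $x \in A$, with $\rho$ a probability measure supported on $B$, so that $\rho(A) = 1 > 0$. To produce the lower bound I would condition on the event that no branching clock rings on $[0, t_1]$, which has probability $e^{-N t_1} > 0$ since the clock fires at rate $N$; on this event the $N$ particles evolve as independent Brownian motions with drift $\mu$, and the (reordered) law of $Y^{(N,\mu)}(t_1)$ under $\mathbb{P}_x$ has a Gaussian density that, by compactness of $A$ and $B$, is bounded below uniformly over $x \in A$ and $y \in B$. Taking $\rho$ to be normalized Lebesgue measure on $B$ and $\varepsilon$ the resulting constant yields the minorization at a single time step, hence strong aperiodicity, and exhibits $A$ as a small set establishing irreducibility.

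Accessibility of $A$ and almost-sure finiteness of the return time both rest on Proposition \ref{hitting_time_finite}: from any initial configuration some particle hits the origin in almost surely finite time, and in expected time at most $\alpha + \beta R_0$ when the configuration starts in $[-R_0, R_0]^N$. To convert ``a particle is at $0$'' into ``the configuration lies in $A$ at a discrete observation time'' I would use a bounded-time collapse event of positive probability, analogous to the event $U_k$ in the proof of Theorem \ref{N-BBM_Hitting_Time}: over a short interval the particle near the origin is repeatedly duplicated while the furthest particles are killed and every Brownian increment stays small, so that after $N-1$ branchings the whole swarm lies within a small neighbourhood of $0$, i.e.\ in $A$. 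Combining the strong Markov property with this event gives $\mathbb{P}_z(\tau_A < \infty) = 1$ for every $z$, hence recurrence.

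For positive recurrence I would bound $\sup_{x \in A} \mathbb{E}_x[\tau_A^+]$, the expected number of $t_1$-steps to return to $A$. Starting from $x \in A \subseteq [-R, R]^N$, Proposition \ref{hitting_time_finite} bounds the continuous hitting time $\tau$ of the origin by $\alpha + \beta R$ uniformly, and the number of skeleton steps it occupies is at most $\tau / t_1 + 1$, of finite expectation; the collapse event then deposits the chain in $A$ with probability at least some $p > 0$, and the number of attempts is stochastically dominated by a geometric variable. The main obstacle is precisely this last bookkeeping: after a failed collapse the configuration is random, and to reapply Proposition \ref{hitting_time_finite} I must re-enter the regime where the swarm lies in some $[-R_0, R_0]^N$ with $R_0$ of finite expectation, which amounts to controlling the spatial spread of the population. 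Since over the bounded collapse interval each particle's displacement has finite expectation (Brownian motion with drift plus finitely many reshufflings), $\mathbb{E}[R_0]$ is finite and a conditioning argument closes the loop; making this diameter and discretization control uniform, rather than merely finite in expectation, is the delicate point requiring care.
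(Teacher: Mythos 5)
Your overall skeleton matches the paper's: the same three Harris-chain conditions, the same minorization obtained by conditioning on no branching during $[0,t_1]$ and bounding the resulting Gaussian density below on a compact box, and the same use of Proposition \ref{hitting_time_finite} together with a positive-probability ``collapse'' event to reach the regeneration set. The genuine gap is in the positive-recurrence step (your item on bounding $\sup_{x\in A}\mathbb{E}_x[\tau_A^+]$), and you have correctly located it but not closed it. Your proposed fix --- that the spread of the configuration entering each new attempt has finite expectation because each particle's displacement over the bounded collapse interval does --- fails: the configuration you must control is the one at the random time at which a particle hits the origin, and Proposition \ref{hitting_time_finite} controls only the hitting \emph{time}, not the spatial radius of the swarm at that time. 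During a waiting period that is merely of finite expectation (no exponential tail for $\tau$ is available, so Lemma \ref{radius_bound} cannot be invoked directly), the swarm may spread without any bound you have established; hence the $R_0$ needed for the next application of the Proposition is not bounded in expectation, let alone uniformly over iterations, the per-attempt cost can grow after each failure, and the geometric-number-of-trials argument does not close.

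The paper resolves exactly this with an idea absent from your proposal: after each hitting time $T_n$ it waits until the time $S_n \geq T_n+1$ at which the particle closest to the origin has branched $N$ times. Because each such branching inserts a particle near the origin while the selection rule kills the particle farthest from it, by time $S_n$ every surviving particle must be near the origin; concretely $R_{S_n} \leq 2\sum_{i=1}^N \tilde{R}^i_{S_n}$, where $\tilde{R}^i_{S_n}$ are the radii of free branching Brownian motions started from the $T_n$-positions. Since $S_n - T_n \overset{d}{=} \max\{\Gamma(N,1),1\}$ has exponential tails, Lemma \ref{radius_bound} gives $\mathbb{E}[\tilde{R}^i_{S_n}] < \infty$, and crucially these bounds are i.i.d.\ over $n$ and independent of $\mathcal{F}_{T_n}$: the radius control is \emph{renewed} at every cycle, uniformly, no matter how spread out the swarm was at $T_n$. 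With that in hand the paper runs a supermartingale and optional-stopping argument on the increments $T_{n+1}-T_n$ to obtain $\sup_{\chi \in \Lambda}\mathbb{E}_\chi[\tau_\Lambda] < \infty$. Without this selection-based regeneration of the radius (or some substitute for it), the uniform bound in your third step cannot be established, so the proposal as written does not prove positive recurrence.
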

In an identical way to \cite{swarm_limit} Proposition 6.5 proving Theorem 1.2, Lemma \ref{recurrent_Harris_chain} will be used to prove Theorem \ref{subcritical_stationarity} in combination with Theorems 6.1 and 4.1 of \cite{recurrent_markov_chains_theorems}. These theorems say that a positive recurrent strongly aperiodic Harris chain
admits a unique invariant probability measure, and that the distribution of the state of the
Harris chain after $n$ steps converges to that invariant probability measure as $n \rightarrow \infty$.

\begin{proof}
Let $Z_n := (Y^{(N, \mu)}(t_1 n)$ Then to show that $(Z_n)_{n=1}^{\infty}$ is a positive recurrent strongly aperiodic Harris Chain then by \cite{recurrent_markov_chains_theorems} we only need to show that there exists $\Lambda \subseteq \mathbb{R}^N$ such that:
\begin{enumerate}
    \item $\mathbb{P}_{\chi}(\tau_{\Lambda} < \infty) = 1 \; \forall \chi \in \mathbb{R}^N$, where $\tau_{\Lambda} := \inf \{n \geq 1: Y_n \in \Lambda \}$
    \item There exists $\epsilon > 0$ and a Probability measure $q$ on $\Lambda$ such that $\mathbb{P}_{\chi}(Z_1 \in C) \geq \epsilon q(C)$ for any $\chi \in \Lambda$ and $C \subseteq \Lambda$
    \item $\sup_{\chi \in \Lambda} \mathbb{E}(\tau_\Lambda) < \infty$

\end{enumerate}

We will prove this with the somewhat arbitrary choice of taking $\Lambda = [-1, 1]^N$.

It is now at this point that our proof will differ from that of Proposition 6.5 in \cite{swarm_limit}. The proof of item 2 above and the fact that this Lemma can be used to prove Theorem \ref{subcritical_stationarity} are the same. However for items 1 and 3, \cite{swarm_limit} relied on a comparison for large $N$ between the Brownian Bee system and a system where particles where killed upon reaching a deterministic radius. We however will rely on the results of Proposition  \ref{hitting_time_finite} to prove these points, which has the advantage of also proving the results of \cite{swarm_limit} Theorem 1.2 for small $N$, however the disadvantage that our proof only works for the dimension $d=1$, whereas the results of  \cite{swarm_limit} hold for higher dimensions.

The rough strategy of proof for these three conditions is then

\begin{enumerate}
    \item We use Proposition \ref{hitting_time_finite} to deduce that infinitely often $Y^{(N, \mu)}(t)$ hits 0, and then at each time this happens there is some positive probability of the process branching, and then remaining inside $[-1, 1]^N$ for a time greater than $t_1$, and hence $\tau_{\Lambda} < \infty$
    \item We may condition on no branching events occurring between time $0$ and $t_1$, and then the process moves as independent Brownian Motions, from which we can deduce the result explicitly
    \item We use Proposition \ref{hitting_time_finite} to say that there are times $T_n$ such that the process hits $0$ infinitely often and $T_n \geq T_{n-1}+1$. Then at each time $T_n$ there is a positive probability that the process duplicates so that all particles are in $[-1, 1]^N$ and remains there for time greater than $t_1$. We also need to choose $T_n$ carefully in order to apply the expectation bound from Proposition \ref{hitting_time_finite}, so that at some point between $T_{n}$ and $T_{n+1}$ we are able to control the ball containing the process. To do this, after $T_n$ has occurred we wait until the particle that hit $0$ has duplicated $N$ times, and then argue by means of a coupling and Lemma \ref{radius_bound} that after this time we can control the ball containing the process. Then we choose $T_{n+1}$ to be after this duplication - allowing us to get a bound uniform in $n$ on $\mathbb{E}[T_{n+1} - T_n \; | \; \mathcal{F}_{T_n}]$
\end{enumerate}

We shall now show the second part first:

Conditional on no branching events taking place in time $t \in [0, t_1]$ the particles move as $N$ independent Brownian motions with drift. Hence,
$$\mathbb{P}(Z_1 \in C) \geq e^{- t_1 N} \int_{C} \prod_{i=1}^N \left( \frac{1}{\sqrt{2 \pi t_1}} e^{-\frac{1}{2 t_1} |y_i - \chi_i - \mu t_1|^2} \right) dy_1 \ldots dy_n$$
And then since $\chi, y \in \Lambda = [-1, 1]^N$, we have that $|y_i - \chi_i - \mu t_1|^2 \leq (2 + |\mu| t_1)^2$. Hence letting $\leb{C}$ denote the Lebesgue measure of $C$ in $\mathbb{R}^N$,
$$\mathbb{P}(Z_1 \in C) \geq e^{- t_1 N} \frac{1}{(2 \pi t_1)^{N/2}} \exp \left(-\frac{N(2 + |\mu| t_1)^2}{2 t_1} \right) \leb{C}$$

So taking $q = \leb{C}/\leb{\Lambda}$ and 
$$\epsilon :=  e^{- t_1 N} \frac{1}{(2 \pi t_1)^{N/2}} \exp \left(-\frac{N(2 + |\mu| t_1)^2}{2 t_1} \right) \leb{\Lambda}$$
we have proven item 2 above.

Now to show the first and third point:

We will inductively define our times $T_n$ such that at each $T_n$ there is a positive probability that $\tau_{\Lambda} < T_{n+1}$, and also $\mathbb{E}[T_{n+1} - T_n]$ is bounded in $n$.

Let $\chi \in \mathbb{R}^N$ be the initial position of the particles, and let 
\begin{equation}
\label{R_chi_definition}
    R^{\chi} := \inf\{R \in \mathbb{R}^{>0} : \chi \in [-R, R]^N\}
\end{equation}

Then we define $T_1$ to be the first time that a particle hits 0, so
$$T_1 := \inf \{ t > 0: Y_k(t) = 0 \text{ for some } k \in \{1, \ldots N \}\}$$

Note in particular that by Proposition \ref{hitting_time_finite} we have
\begin{equation}
\label{T_1_bound_equation}
    \mathbb{E}[T_1] < \alpha + \beta R^{\chi}
\end{equation}

Now to inductively define $T_n$, we first define a random variable $S_n$, so that $S_n > T_n + 1$ and the closest particle to 0 has duplicated $N$ times. 

$$S_n := \max \left\{T_n + 1, \inf \{t > T_n: \text{ the particle closest to 0 has branched } N \text{ times by time } t\}\right\} $$
By particle closest to 0, we mean that at a branching time $J_i$, the particle chosen to branch is the closest particle to 0 \emph{at time $J_i$}. 

Note since we have independence of the exponential random variables governing the branching rates from the Brownian motions, $S_n - T_n$ has the distribution of the maximum 1 and the sum of $N$ exponential distributions each of rate 1. i.e.
\begin{equation}
\label{S_n_Gamma}
    S_n - T_n\overset{d}{=} \max\{ \Gamma(N, 1), 1 \}
\end{equation} where $\Gamma(N, 1)$ is a gamma distribution of shape $N$ and rate 1.
The idea is that we can bound the tail of $S_n$, and after time $S_n$, any "far away" particles have been killed and replaced by particles in a controllable distance away from the origin. Next, we define
$$T_n:= \inf \{ t > S_n: Y_k(t) = 0 \text{ for some } k \in \{1, \ldots N \}\}$$
Now we claim that:

\begin{enumerate}[i.]
    \item $R_{S_n} \preccurlyeq L_n$ for $L_n$ i.i.d random variables independent of $\mathcal{F}_{T_n}$ with $\mathbb{E}[L_k] < \infty$, where $R_{S_n}:= \inf\{R \in \mathbb{R}^{>0}:  Y^{(N, \mu)}(S_n) \in [-R, R]^N\}$ and we mean $\preccurlyeq$ in the sense of stochastic domination, i.e. there is a coupling so that $R_{S_n} \leq L_n$ holds for every $n$.
    \item  $\mathbb{P}(\tilde{N} \leq n+1 \; | \; \mathcal{F}_{T_n}) \geq c$ for some constant $c$ where $\tilde{N} := \inf \{n > 0:  t_1 \tau_{\lambda} \leq T_n \}$ has
\end{enumerate}

Suppose for a moment we have proven these two facts. Then by Lemma \ref{stopping_time_expectation} we have that $\mathbb{E}[\tilde{N}] < \infty$.

Then consider, for $n \geq 1$: 
$$\mathbb{E}[T_{n+1} - T_n \; | \; \mathcal{F}_{T_n}]  = \mathbb{E}[S_n - T_n \; | \; \mathcal{F}_{T_n}]+\mathbb{E}[T_{n+1} - S_n \; | \; \mathcal{F}_{T_n}]$$
Note then that $S_k - T_k$ is independent from $\mathcal{F}_{T_n}$ by the strong Markov property, since it only depends on the exponential distributions governing the process. Furthermore, we may then apply Proposition \ref{hitting_time_finite} and use the strong Markov property at time $S_n$ to conclude that

$$ \mathbb{E}[S_n - T_n \; | \; \mathcal{F}_{S_n}] \leq \alpha + \beta R_{S_n}$$

Hence, for $n \geq 1$: 

$$\mathbb{E}[T_{n+1} - T_n \; | \; \mathcal{F}_{T_n}] \leq \mathbb{E}[S_1 - T_1] + \alpha + \beta R_{S_n} \leq \alpha' + \beta \mathbb{E}[R_{S_n} \; | \; \mathcal{F}_{T_n}]$$

Where this is using the fact that $S_k - T_k$ is i.i.d with finite expectation (Since we identified the distribution in Equation \ref{S_n_Gamma})

\begin{equation}
\label{T_n_diff_bound}
     \leq \alpha' + \beta \mathbb{E}[L_n \; | \; \mathcal{F}_{T_n}] =: C
\end{equation}

Where we have used that $R_{S_n} \leq L_n$ almost surely, and that $\left(L_n\right)_{n \geq 1}$ are i.i.d with each $L_n$ independent from $\mathcal{F}_{T_n}$

Additionally, we have that 
\begin{equation}
\label{T_1_bound}
    \mathbb{E}[T_1] \leq \alpha + \beta R^{\chi}
\end{equation}
Where $R_\chi$ is the radius of the ball initially containing all particles (defined in Equation \ref{R_chi_definition})

So if we set $Z_k = T_k - T_{k-1}$ and define
$$M_n := \sum_{k=1}^n\left(Z_k - C- (\alpha + \beta R^{\chi})\right)$$

Then equations \ref{T_n_diff_bound} and \ref{T_1_bound}  tell us that $M_n$ is a supermartingale adapted to the filtration $\mathcal{G}_n := \mathcal{F}_{T_n}$.
We then have the properties:

\begin{itemize}
    \item $\mathbb{E}[|M_{n+1} - M_n| \; | \; \mathcal{F}_{T_n}]$ is almost surely bounded in $n$ (This follows from $\mathbb{E}[Z_n \; | \; \mathcal{F}_{T_n}] \leq C$, and $Z_n$ being positive)
    \item $\tilde{N}$ is a stopping time of the filtration $\mathcal{G}_n$
    \item $\mathbb{E}[\tilde{N}] < \infty$
\end{itemize}

Which is enough to apply Optional Stopping for super-martingales to deduce that:

$$\mathbb{E}[M_{\tilde{N}}] \leq 0$$
And hence
$$\mathbb{E}[t_1 \tau] \leq \mathbb{E}[T_{\tilde{N}}] = \mathbb{E}\left[\sum_{k=1}^{\tilde{N}}Z_k \right] \leq \mathbb{E}\left[\sum_{k=1}^{\tilde{N}}(C + \alpha + \beta R^{\chi})\right] = (C +\alpha + \beta R^{\chi})\mathbb{E}[\tilde{N}]$$

Which in particular after dividing through by $t_1$ proves items 1 and 3 from the start of the proof, since $\mathbb{E}[\tilde{N}] < \infty$ and does not depend on $\chi$; and $R^{\chi} \leq 1$ for $\chi \in \Lambda$

Hence it suffices to prove items i and ii above. Let us start with i.

We consider the positions of the $N$ particles at time $T_n$ and we know that one particle is at the origin at this time. Then we couple the process from time $T_n$ to $N$ free branching Brownian motion processes with drift started at the position of each particle $Y_i(T_n)$ for $1 \leq i \leq N$. Since our model is then branching Brownian motion with selection, the model with a free branching Brownian motion with drift started at every particle will then contain our model, in the sense that the positions of $N$ of the particles in the free BBM model will correspond to $N$ particles in the Brownian Bees with drift model. Let us label these $N$ BBM processes by $X^i_u(t), u \in \mathcal{N}^i_t$, where $1 \leq i \leq N$.  (See Definition \ref{BBM_definition} for the definition of free Branching Brownian motion), so that $\left(X^i_u(t)\right)_{u \in \mathcal{N}^i_t}$ are the $\mathcal{N}^i_t$ particles that are the children of $Y_i(T_n)$, and they are at positions $X^i_u(t)$. Then at time $S_n$, let 

$${\tilde{R}}^i_{S_n} := \inf\left\{R > 0: X^i_u(t) \in [Y^i(T_n) - R, Y^i(T_n) + R], \forall \; T_n \leq t \leq S_n\right\}$$

So $\tilde{R}^i_{S_n}$ gives us a radius surrounding the position of each particle at time $T_n$ that contains all the children of that particle until time $S_n$.

Note then that in this free BBM model, since each particle and its children move independently, then by the strong Markov property we have that ${\tilde{R}}^i_{S_n}$ are i.i.d random variables whose law does not depend on $n$. Furthermore, we may use Lemma \ref{radius_bound} to deduce that:
\begin{equation}
\label{tilde_R^i_bound}
    \mathbb{E}[{\tilde{R}}^i_{S_n}] = C < \infty
\end{equation}
for $C$ independent of $n$.\footnote{To apply lemma \ref{radius_bound} to the drift case, use the fact that (the radius of the process without drift) + $|\mu|(S_n-T_n)$ will dominate the radius of the process with drift. Then equation \ref{S_n_Gamma} tells us that $S_n-T_n$ has the tail of a gamma distribution, which in particular has a tail dominated by an exponential, allowing us to both apply the Lemma, and deduce that $\mathbb{E}[S_n - T_n] < C' < \infty$ for $C'$ independent of $n$}

We claim now that the radius of Brownian Bees at time $S_n$, $R_{S_n}$ is dominated by $2\sum_{i=1}^N \tilde{R}^i_{S_n}$. To see this, suppose for a contradiction that $R_{S_n} \geq 2\sum_{i=1}^N \tilde{R}^i_{S_n}$. Then there must be some particle $k$ so that $Y_k(T_n) + \tilde{R}^k_{S_n} \geq 2\sum_{i=1}^N \tilde{R}^i_{S_n}$, and hence $Y_k(T_n) - \tilde{R}^k_{S_n} \geq 2\sum_{i \neq k} \tilde{R}^i_{S_n}$. But then in the Brownian Bees model there is always at least one alive particle within distance $2\sum_{i \neq k} \tilde{R}^i_{S_n}$ from the origin for $t \in [T_n, S_n])$. 

Since if we denote 
$$D^i := [|Y_{i}(T_n)| - \tilde{R}^{i}_{S_n}, |Y_{i}(T_n)| + \tilde{R}^{i}_{S_n}]$$
Then the distance to the origin of the children of particle $i$ is always in the range $D^i$ for $t \in [T_n, S_n]$. 

Then since there is initially one particle $Y_j(T_n)$ that is at the origin at time $T_n$, then the only way that all the children of this particle can be killed, is if there's some other particle $Y_{j'}(T_n)$ that is closer to the origin at some point, i.e. $D^j$ intersects $D_{j'}$. Then the only way that the particle $j'$ can be killed is if there's another particle that is closer to the origin than $Y_{j'}$ at some point, i.e. $D_{j'}$ intersects $D_{j''}$.

Then continuing this argument by induction, we see that there is always one particle alive within distance $d = 2\sum_{i \neq k} \tilde{R}^i_{S_n}$ from the origin, and furthermore whenever the particle that is closest to the origin branches, all its children remain within distance $d$ from the origin. 

Hence since we know that by time $S_n$ the particle closest to the origin will have branched at least $N$ times, then by time $S_n$, the particle $Y_k(T_n)$ and all its children must have been killed, as every time the particle closest to the origin branches, it adds one more particle which, along with its children, will stay within distance $d$ from the origin. So since all the children of particle $Y_k$ are always further than $d$ away from the origin, these will all be killed.

Hence in conclusion, 
$$R_{S_n} \leq L_n := 2\sum_{i=1}^N \tilde{R}^i_{S_n}$$
which gives the desired bound on $R_{S_n}$ since by the strong Markov property at time $T_n$, we can see that $L_n$ are independent of $\mathcal{F}_{T_n}$, and so i.i.d. And furthermore by equation \ref{tilde_R^i_bound} we have that $\mathbb{E}[L_n] < \infty$, proving item i above.

Now to prove item ii, we will be a little brief with our argument, as similar arguments are used several times throughout this dissertation (see e.g. proof of Proposition \ref{hitting_time_finite} for a more detailed version of this argument)

We argue via the strong Markov property at time $\mathcal{F}_{T_n}$. Let $A_n$ be the event that:

\begin{itemize}
    \item None of the Brownian motions with drift driving the particles will move by more than $1/(2N)$ between time $T_n$ and $T_n + (1-t_1)$
    \item The closest particle to the origin will branch $N$ times between time $T_n$ and $T_n + (1-t_1)$
    \item None of the Brownian motions with drift driving the particles will move by more than $1/2$ between time $T_n + (1-t_1)$ and $T_n + 1$
    \item No branching events will occur between time $T_n + (1-t_1)$ and $T_n + 1$
\end{itemize}

Then by standard properties of Brownian motions and exponential distributions $\mathbb{P}(A_k) > 0$. And furthermore since $T_{n+1} \geq S_n \geq T_n + 1$, by the strong Markov property the events $A_n$ are independent for different $n$, and of the same probability. Furthermore, if the event $A_n$ occurs then $t_1 \tau_{\Lambda}$ will be less than $T_{n+1}$, since the particles will have stayed within $\Lambda$ for a time at least $t_1$, and hence there must be some discrete time-step $k t_1 \in [T_n + (1- t_1), T_n + 1]$ for which the particles lie in $\Lambda$. Hence $\mathbb{P}(\tilde{N} \leq n+1 \; | \; \mathcal{F}_{T_n}) \geq \mathbb{P}(A_1)$ - proving item ii.
\end{proof}

The proof of Theorem \ref{subcritical_stationarity} now follows by the exact same argument to the proof of Theorem 1.2 in \cite{swarm_limit} where we use Lemma \ref{recurrent_Harris_chain} in place of Proposition 6.5.\footnote{Due to limited space, we omit typing out this argument}

\end{section}

\begin{section}{Proof of Proposition \ref{hitting_time_finite}}

In order to prove Proposition \ref{hitting_time_finite}, we start with stating and proving a weaker result: that if all particles are on one side of $0$ initially, then the system will hit 0 in a time of bounded expectation:

\begin{lemma}
\label{hit_if_one_side}
Let $Y^{(N, \mu)} = \left( Y_1(t), \ldots Y_N(t)\right)$ be a one dimensional Brownian Bee system with drift $\mu \in \mathbb{R}$ such that $|\mu| < v_N$ where $v_N$ is the speed of a standard N-BBM. Then if we have initial conditions such that one of:
\begin{itemize}
    \item $Y_1(0) \leq Y_N(0) \leq 0$
    \item or $0 \leq Y_1(0) \leq Y_N(0)$
\end{itemize}
Then the stopping time
$$\tau := \inf \left\{ t > 0 : Y_k(t) = 0 \text{, for some } k \in \{1, \ldots N \} \right\}$$ is almost surely finite

Moreover, if additionally $Y^{(N, \mu)}(0) \in [-R_0, R_0]^N$ has all particles within ${R_0}$ from the origin, then there are deterministic constants $\alpha = \alpha_\mu$ and $\beta = \beta_\mu$ not depending on $R_0$ such that 
$$\mathbb{E}[\tau] \leq \alpha + \beta {R_0}$$ 
\end{lemma}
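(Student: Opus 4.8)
The plan is to reduce Lemma~\ref{hit_if_one_side} to Theorem~\ref{N-BBM_Hitting_Time} by coupling the Brownian Bee system with drift to a standard N-BBM with killing. By symmetry (the system is invariant under $y \mapsto -y$, $\mu \mapsto -\mu$) it suffices to treat the case $0 \leq Y_1(0) \leq \cdots \leq Y_N(0)$; here every particle starts to the right of the origin. First I would observe that the stopping time $\tau$ is exactly the first time a particle reaches $0$, i.e.\ the first time the leftmost particle $Y_1$ hits $0$ from above, since by continuity of Brownian paths a particle can only cross $0$ by having the minimum coordinate touch $0$.

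\emph{The key coupling.} The crucial point is that as long as all particles remain in the region $(0,\infty)$ (equivalently, as long as $\tau$ has not yet occurred), the score function for Brownian Bees, $s(x) = -|x| = -x$, coincides with the linear score $s(x) = -x$ governing which particle is \emph{farthest} from the origin; but the particle farthest from the origin among positive particles is precisely the \emph{rightmost} one, $Y_N$. Hence, before time $\tau$, the Brownian Bee selection rule removes the rightmost particle at every branching event, which is exactly the killing rule of an N-BBM with killing from the right. I would therefore couple $Y^{(N,\mu)}$ to an N-BBM $X = (X_1,\dots,X_N)$ with killing from the right, using the same driving Brownian motions, jump times $(J_i)$, and uniform selections $(K_i)$, started from the same initial configuration. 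Under this coupling the two processes agree pathwise on $[0,\tau)$. Now set $\tilde{X}_k(t) := X_k(t) - (-\mu) t = X_k(t) + \mu t$ to absorb the drift, or equivalently map to the drift-free picture: the event $\{Y_1(t) = 0\}$ for the drifted Bees corresponds, after subtracting the drift, to the leftmost N-BBM particle crossing the moving boundary $\mu t$.

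\emph{Invoking the hitting-time bound.} Here the orientation must be handled carefully. Reflecting so that $X$ has killing from the left with drift $-\mu$ (so the whole cloud tends to move left at speed $v_N$ against a positive drift $\mu < v_N$), the condition $Y_1(t) = 0$ translates into the leftmost particle of a standard N-BBM reaching a level that, in the drift-removed frame, reads as $X_1(t) - \mu t \geq R$ with $R$ comparable to the initial displacement $R_0$. This is precisely the quantity controlled by $\tau_{R,\mu}$ in Theorem~\ref{N-BBM_Hitting_Time}. Since the coupling gives $\tau \leq \tau_{R_0,\mu}$ (the Bees hit $0$ no later than the corresponding N-BBM event, because before $\tau$ the processes coincide and one is reaching the origin as the cloud is dragged toward it at net speed $v_N - \mu > 0$), I would conclude
$$\mathbb{E}[\tau] \leq \mathbb{E}[\tau_{R_0,\mu}] \leq \alpha + \beta R_0,$$
with $\alpha,\beta$ depending only on $\mu$, which also yields almost-sure finiteness. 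The main obstacle I anticipate is getting the \textbf{orientation and the exact boundary level right}: one must verify that starting all particles at displacement at most $R_0$ on one side corresponds to an N-BBM first-passage across a level of size $O(R_0)$ rather than something growing with time, and that the net drift indeed points the cloud toward the origin (using $|\mu| < v_N$) so that Theorem~\ref{N-BBM_Hitting_Time} applies with its linear-in-$R$ bound. Once the coupling is correctly aligned, the expectation bound transfers directly.
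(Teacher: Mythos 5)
Your proposal follows essentially the same route as the paper's proof: reduce by symmetry to the case $0 \leq Y_1(0) \leq \cdots \leq Y_N(0)$, observe that up to time $\tau$ the Bees' selection rule coincides pathwise with an N-BBM with killing from the right (plus drift), and then transfer the linear-in-$R$ bound from Theorem~\ref{N-BBM_Hitting_Time}. The one step you flag as the "main obstacle" (the spread-out initial condition in $[0,R_0]$ versus the point initial condition required by Theorem~\ref{N-BBM_Hitting_Time}) is closed in the paper by a one-line monotone coupling in the spirit of Lemma~\ref{N_coupling}: dominate the coupled kill-from-right N-BBM by one started with all $N$ particles at $R_0$, so that after reflection and translation the first-passage level is exactly the constant $R_0$ and Theorem~\ref{N-BBM_Hitting_Time} applies verbatim.
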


Having this lemma will then allow us to prove the Proposition, that $\tau$ is finite for \emph{any} initial condition. Furthermore we will also deduce that $\mathbb{E}(\tau)$ is finite.

\begin{proof}[Proof of Lemma \ref{hit_if_one_side}]
    Without loss of generality we may assume that we start in some initial condition with $0 \leq Y_1(0) \leq \ldots \leq Y_N(0)$. Where here we are using the symmetry of the statement to assume we are above 0 as opposed to below 0; which we achieve by potentially reversing the direction of the drift $\mu$ to $-\mu$.
    
    Now let $X^{(N)}$ be a standard N-BBM with killing from the right, and couple $X^{(N)}$ to $Y^{(N, \mu)}$ so that we are using the same random variables to generate each process. Moreover, start $X^{(N)}$ so that it is coupled to the same initial condition as $Y^{(N, \mu)}$. Then while $Y^{(N, \mu)}$ is on the right of $0$, it will behave identically to $\tilde{X}^{(N, \mu)} : = \left(X_1(t) + \mu t, \ldots, X_N(t) + \mu t \right)$. In other words if 
    $$\tau = \inf \left\{ t > 0 : Y_k(t) = 0 \text{, for some } k \in \{1, \ldots N \} \right\}$$
    
    Then for $t < \tau$, $\tilde{X}_n(t)= Y_n(t)$. So $\tau$ is also the first time that the process $\tilde{X}^{(N, \mu)}$ hits 0, and since $\tilde{X}^{(N, \mu)}$ initially starts above 0, it must be the particle $\tilde{X}_1(t)$ that hits 0 first.
    
    However, by Lemma \ref{coupling_rightmost}, we may further couple the process $X^{(N)}$ so that it is dominated by a process $X'^{(N)}$, where initially all particles in $X'^{(N)}$ are at the same position $R_0$. If $\tau'$ is the first time that this system hits $0$, then it follows from this coupling that $\tau < \tau'$ almost surely. However by Theorem \ref{N-BBM_Hitting_Time}, we then deduce that for some $\alpha$, $\beta$ 
    
    $$\mathbb{E}[\tau] \leq \mathbb{E}[\tau'] < \alpha + \beta R_0$$
    
    In particular, $\tau$ is always finite

\end{proof}
    
Now, moving onto the main Proposition:

\begin{proof}[Proof of Proposition \ref{hitting_time_finite}]

Let $\rho := \inf \left\{ t>0 : Y_N(t) <0 \text{ or } Y_1(t) > 0\right\} $, i.e. $\rho$ is the first time that all particles are on one side of $0$. Then conditional on $\rho$ being finite, we may use the strong Markov property at time $\rho$, to get a Brownian Bees system started in position $(Y_1(\rho), \ldots Y_N(\rho))$ - which is necessarily all on one side of $0$. This allows us to apply Lemma \ref{hit_if_one_side} to conclude that $\tau$ is finite. Hence to show the finiteness of $\tau$, it is sufficient to either show that either $\rho < \infty$ or $\tau < \infty$.

Now consider sampling the process $Y^{(N, \mu)}$ at discrete time steps, $t = k$ for $k = 1, 2, \ldots$. Then consider the events $A_k$ and $B_k$ (which the reader may consider to stand for "Above" and "Below"), such that

\begin{itemize}
    \item $A_k$ is the event that at time $k$, if $Y_n(k)$ is the closest particle to $0$, then $Y_n(k) > 0$
     \item $B_k$ is the event that at time $k$, if $Y_n(k)$ is the closest particle to $0$, then $Y_n(k) < 0$
\end{itemize}

Then up to null sets, $A_k$ and $B_k$ are a partition of our probability space $\Omega$. And clearly they are both also $\mathcal{F}_k$ measurable events.

Now we further define events $U_k$ and $D_k$ (which the reader may take as "Up" and "Down")
such that $U_k = U^1_k \cap U^2_k \cap U^3_k \cap U^4_k$ and denoting $\tilde{B}_n(t) = B_n(t) + \mu t$ to be the Brownian motions with drift driving the process $Y_n(t)$ we have:

\begin{itemize}
    \item $U^1_k$ is the event that no branching events occur between time $k$ and time $k+1/2$
    \item $U^2_k$ is the event that if $Y_n(k)$ is the closest particle to $0$ at time $k$, and $\tilde{B}^{n'}(t)$ is the drifting Brownian motion driving the child of $Y_n(k)$ between time $k$ and $k+1/2$, then $\tilde{B}^{n'}(k+1/2) - \tilde{B}^{n'}(k) \geq 1$; and $\left\{ \sup_{1 \leq n \leq N, n \neq n'} \sup_{k \leq t \leq k+1/2} |\tilde{B}^n(t) - \tilde{B}^n(k)| \leq 1/2 \right\}$
    \item $U^3_k$ is the event that $N$ branching events occur between time $k+1/2$ and time $k+1$ and the particles that duplicates is in position $Y_{n'}(t)$ (i.e. there are $n' - 1$ particles to its left) where $Y_{n'}(k)$ is the closest particle to 0 at time $k$
    \item $U^4_k$ is the event $\left\{ \sup_{1 \leq n \leq N} \sup_{k+1/2 \leq t \leq k+1} |\tilde{B}^n(t) - \tilde{B}^n(k)| \leq 1/(2N) \right\}$
\end{itemize}

We may read this event $U_k$ as the particle closest to $0$ at time $k$ moving up by 1 unit, then splitting many times in a row without any Brownian motions moving very far.

\begin{figure}[ht]
\includegraphics[width=\textwidth]{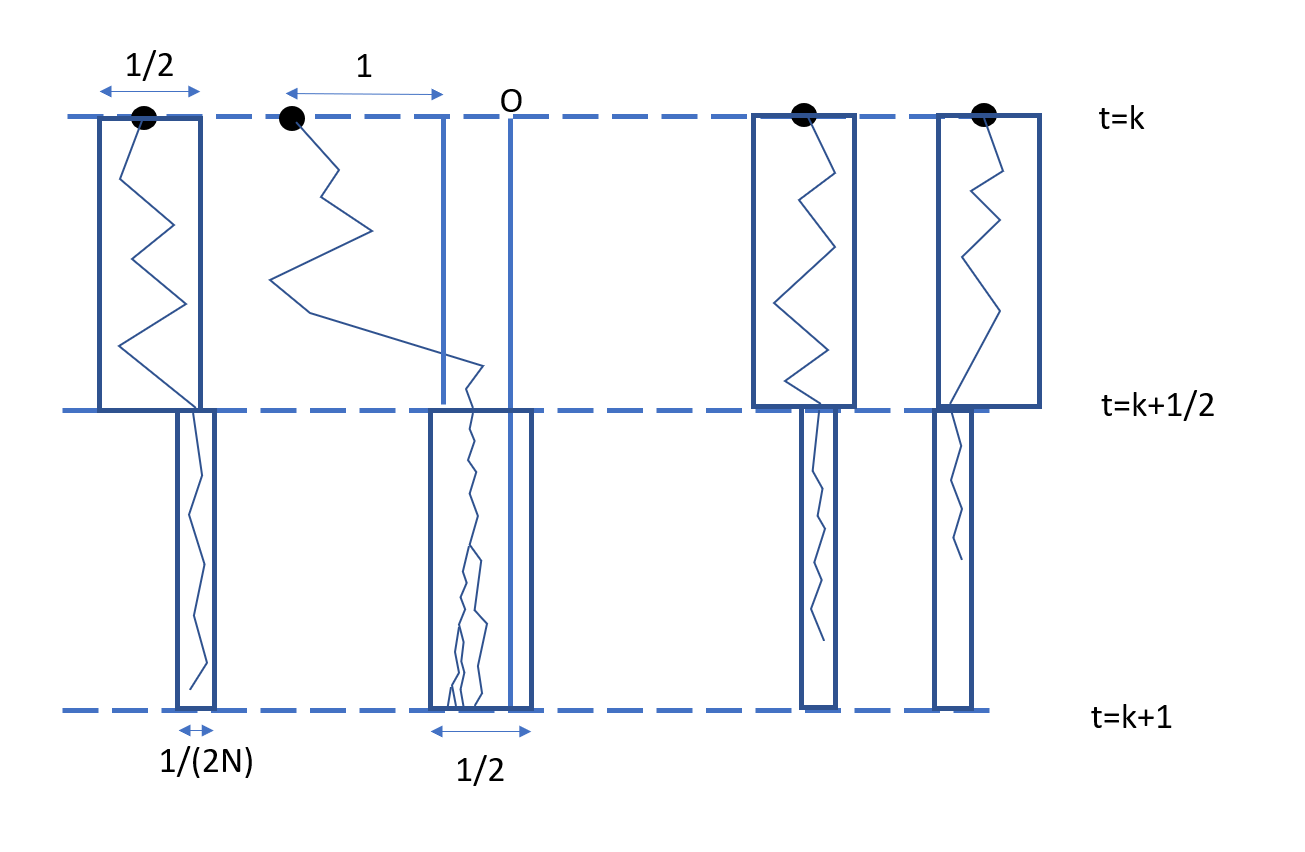}
\caption{\emph{The Event $U_k$. Between $t=k$ and $t=k+1/2$ no particles move more than 1/2, except the closest particle to the origin which moves a distance 1 towards the origin. Then between time $t=k+1/2$ and $t=k+1$ no particles move more than $1/(2N)$. Note due to a possible additive effect with the children of the closest particle branching this only allows us to say that the particles remain with in a radius 1/2 from the closest particle at $t=k+1/2$. This control over the supremum ensures that no other particles can become closer to the origin between $t=k+1/2$ and $t=k+1$. Furthermore the closest particle or its children branch $N$ times in $t \in (k+1/2, k+1)$. The end result is that provided the closest particle at time $k$ was below the origin, then at $t=k+1$ all particles are now either on one side, or one particle has hit 0.}}
\end{figure}

The event $D_k = D^1_k \cap D^2_k \cap D^3_k \cap D^4_k$ is then defined very similarly, except instead of the closest particle moving up, it moves down instead. so $D^1_k = U^1_k$; $D^3_k = U^3_k$; $D^4_k = U^4_k$ and $D^2_k$ is the event $\tilde{B}^{n'}(k+1/2) - \tilde{B}^{n'}(k) \leq -1$ and;
$$\left\{ \sup_{1 \leq n \leq N, n \neq n'} \sup_{k \leq t \leq k+1/2} |\tilde{B}^n(t) - \tilde{B}^n(k)| \leq 1/2 \right\}$$

The point then of constructing these complex events, is that if either $A_k \cap D_k$ or $B_k \cap U_k$ occur, then we have that either $\rho \leq k + 1$ or $\tau \leq k +1$. This is because if the closest particle to $0$ at time $k$ is above 0, and the event $D_k$ occurs, Then at time $k+1/2$, either this particle has crossed 0 and so $\tau \leq k + 1$, or it still lies on the right of 0 and it or its children will be the closest particle to $0$ during time $t \in [k+1/2, k+1]$ - and hence when $N$ branching events occur, all particles will now be on the right of 0 and $\rho \leq k + 1$.

Note also that since we are controlling all the movements of the Brownian motions, then for the event $U^3_k$ or $D^3_k$ the particle in position $n'$, will always be a child of the particle that moved by at least 1.

We now additionally claim that: $(U_k)_{k \geq 1}$ are independent of each other, and similarly for $(D_k)_{k \geq 1}$. And furthermore, $U_k$ and $D_k$ are independent from $\mathcal{F}_k$.

It is then sufficient to show this last point, since $U_k$ is $\mathcal{F}_{k+1}$ measurable, and similarly for $D_k$, so their mutual independence will follow from this.

It is clear then that the events $U^1_k$ and $U^4_k$ are independent from $\mathcal{F}_k$, since we have independence of increments for both the Brownian motions and the exponential distribution governing the next time a particle will branch. It is slightly less clear that $U^2_k$ and $U^3_k$ are independent from $\mathcal{F}_k$, since both events make explicit reference to the "closest" particle at time $t = k$: $Y_{n'}(k)$, and $n'$ is explicitly $\mathcal{F}_k$ measurable. Fortunately though since the random variables $\left(B^1(t) - B^1(k) \right), \ldots, \left(B^N(t) - B^N(k) \right)$ are i.i.d and independent of $\mathcal{F}_k$ for any $t > k$, we can relabel them using some order depending on $\mathcal{F}_k$ and retain independence. This holds similarly for determining which random variable is branching, since we can view the particle in position $n$ as each having their own Poisson process of rate $1$ attached to them. And then since these Poisson processes are i.i.d, we can relabel them by choice using some $\mathcal{F}_k$ dependent order. The details that this relabelling is valid are contained in the appendix Lemma \ref{relabelling_lemma}. Hence from this we deduce that $U_k$ and $D_k$ are independent from $\mathcal{F}_k$.

We now have all the ingredients to our argument, it finally remains to show that $A_k \cap D_k$ or $B_k \cap U_k$ occurs eventually for some $k$, as if this happens almost surely, then it tells us that at least one of $\rho$ or $\tau$ is finite almost surely, from which we deduce the result.

From standard properties of Brownian motions and exponential distributions, we can see that $U_k$ and $D_k$ are both events with positive probability, and furthermore this probability does not depend on $k$; since each event depends only on the jump times between $k$ and $k+1$ and the random variables $\left(\tilde{B}^1(t) - \tilde{B}^1(k) \right), \ldots, \left(\tilde{B}^N(t) - \tilde{B}^N(k) \right)$. Both of which are i.i.d for different values of $k$. Note however it is \emph{not} the case that $\mathbb{P}(U_k) = \mathbb{P}(D_k)$ since the drift $\mu$ gives a higher weight that the drifting Brownian motions $\tilde{B}^n$ will move one way than the other. Let 
\begin{equation}
\label{c_definition}
    c = \min \{\mathbb{P}(U_k), \mathbb{P}(D_k) \}
\end{equation} so $c >0$

Now define a stopping time 
$$\tilde{K} := 1 + \inf \{k \in \mathbb{N}, \omega \in (A_k \cap D_k) \cup (B_k \cap U_k) \}$$
Then by the above discussion we know that almost surely at least one of $\rho$ or $\tau$ is less than $\tilde{K}$. It remains to show that $\tilde{K}$ is finite.

we calculate:
$$\mathbb{P}(\tilde{K} \leq k + 1 \; | \; \mathcal{F}_k) \geq \mathbb{E}\left[ \mathbbm{1}_{(A_k \cap D_k) \cup (B_k \cap U_k)} \; | \; \mathcal{F}_k \right]$$
$$=  \mathbb{E}\left[ \mathbbm{1}_{A_k}\mathbbm{1}_{D_k} + \mathbbm{1}_{B_k}\mathbbm{1}_{U_k} \; | \; \mathcal{F}_k \right] = \mathbbm{1}_{A_k}\mathbb{P}(D_k) + \mathbbm{1}_{B_k}\mathbb{P}(U_k) \geq c$$

Where we have used the fact that $A_k$ and $B_k$ are a partition and $\mathcal{F}_k$ measurable; and that $U_k$ and $D_k$ are independent of $\mathcal{F}_k$. Now we conclude by Lemma \ref{stopping_time_expectation} that $\mathbb{E}[\tilde{K}] < \infty$ and hence $\tilde{K}$ is finite almost surely. Therefore since either $\tau$ or $\rho$ is less than $\tilde{K}$ almost surely, and conditional on $\rho$ being finite we have that $\tau$ is finite, we conclude the first part of the lemma that $\tau$ is finite.

Now for the moreover part we wish to get control over the ball containing the Brownian Bees process at time $\tilde{K}$. Let $R_t$ denote the ball that has contained the process up to time $t$, so 
$$R_t := \sup_{s \leq t} \left\{ \inf_{R > 0} \{R: \; |Y_i(s)| \leq R, \: 1 \leq i \leq N \} \right\}$$

Clearly then $R_t$ is non-decreasing in $t$, and $Y^{(N, \mu)}(t) \subseteq [-R_t, R_t]^N$. The idea now is that we can make a crude approximation where we consider the process without selection allowing us to use Lemma \ref{radius_bound}, which tells us that at time $\tilde{K}$, $R_{\tilde{K}}$ has integrable tails. To make this arguement more precise, we know at time $0$ the particles $Y^{(N, \mu)}(0) \subseteq [-R_0, R_0]^N$.

Then to bound the upper tail and lower tail separately define the quantities:

\begin{itemize}
    \item $R_t^+ := \sup_{s \leq t} \left\{ \inf_{R > 0} \{ Y_i(t) \leq R, \: 1 \leq i \leq N \} \right\}$
    \item $R_t^- := \sup_{s \leq t} \left\{ \inf_{R > 0} \{ Y_i(t) \geq -R, \: 1 \leq i \leq N \} \right\}$
\end{itemize}
Clearly then $R_t$ = $\max\{R_t^+, R_t^-\}$.

Then to bound $R_t^+$, we apply Lemma \ref{coupling_rightmost} to couple the process $Y^{(N, \mu)} - \mu t - R_0$ to an N-BBM with killing from the left, and all $N$ particles starting at the origin \footnote{We are doing a two-step coupling here to dominate the Brownian Bees by a free branching Brownian motion with $N$ particles started at the same position. There are other ways to achieve this but coupling to an N-BBM first to move the particles to the same position makes use of results already proven}. This N-BBM can then be coupled to $N$ free branching Brownian motion processes\footnote{The topic of free branching Brownian motion is somewhat glossed over in this dissertation due to space constraints. The coupling is done in the intuitive way, since N-BBM is a branching Brownian motion with selection, we simply do not perform the selection to make the coupling.} (definition \ref{BBM_definition}) started at the origin, since these correspond to $X^{(N)}$ but without deleting any particles. Then let $R^{i,+}_t$ be the smallest $R$ such that the $i^\text{th}$ free branching Brownian motion process has not hit $R$ by time $t$. It follows then that $$R^+_{\tilde{K}} - \mu \tilde{K} - R_0 \leq \max\{R^{1,+}_{\tilde{K}}, \ldots R^{N,+}_{\tilde{K}} \}$$

Next, $\tilde{K}$ is dominated by an exponential since for $x$ large enough,
\begin{equation}
\label{tilde_K_tail}
\mathbb{P}(\tilde{K} > x) = (1-c)^{\lfloor{x} \rfloor} \leq e^{-\alpha x}
\end{equation}
 where $c$ is from equation \ref{c_definition} and $\alpha$ is some positive constant. Then it follows by Lemma \ref{radius_bound} that $\mathbb{P}(R^{i, +}_{\tilde{K}} \geq x) \leq e^{-c_i \sqrt{x}}$ for $x$ large enough. In particular

$$\mathbb{E} \left[ \max\{R^{1,+}_{\tilde{K}}, \ldots R^{N,+}_{\tilde{K}} \} \right] \leq \sum_{i=1}^N \mathbb{E}[R^{i,+}_{\tilde{K}}] < \infty$$

And note that this quantity is independent of $R_0$.
Hence we conclude that 
\begin{equation}
\label{R_expectation_bound}
 \mathbb{E}[R^+_{\tilde{K}}] \leq R_0 + \mu \mathbb{E}[\tilde{K}] + C = R_0 + C'
\end{equation}

Where $C$, $C'$ are constants depending only on $\mu$. The same argument works for bounding $\mathbb{E}[R^-_{\tilde{K}}]$, so we deduce that $\mathbb{E}[R_{\tilde{K}}] \leq 2R_0 + C''$

Now finally, we know that almost surely one of $\tau$ or $\rho$ is less than $\tilde{K}$, and we know that if $\rho < \tilde{K}$ but $\tau > \tilde{K}$, then at time $\tilde{K}$ all particles must be on one side of $0$.

Hence 
$$\mathbb{E}(\tau) = \mathbb{E}\left[\mathbb{E}(\tau | \mathcal{F}_{\tilde{K}} )\right] \leq \mathbb{E}\left[\tilde{K} + \mathbb{E}\left(\tau \mathbbm{1}_{\tau > \tilde{K}} | \mathcal{F}_{\tilde{K}} \right) \right]$$

Then we can use the strong Markov Property at $\tilde{K}$ and Lemma \ref{hit_if_one_side} to deduce that 
$$ \mathbb{E}(\tau) \leq \mathbb{E}(\tilde{K}) + \mathbb{E}(\tilde{K} + \alpha' + \beta' \tilde{R}_{\tilde{K}}) $$
And finally using equations \ref{tilde_K_tail} and \ref{R_expectation_bound}
$$\mathbb{E}(\tau) \leq \alpha + \beta R_0$$

\end{proof}

\end{section}

\chapter{Conclusion}

We have successfully found the critical case for the recurrence and transience of Brownian Bees. Perhaps the natural next question is what happens at the criticality $\mu = v_N$? In order to answer this question more knowledge would be needed about N-BBMs; for example if we wanted to show that the case $\mu = v_N$ is recurrent we would need a similar result to Theorem \ref{N-BBM_Hitting_Time}, which would require more understanding of the N-BBM than the fairly crude coupling used to prove the above theorem. 

A different way to go further would be to consider the convergence in time of the N-BBM when viewed from the leftmost particle (i.e. the process $\left(X_i(t) - X_1(t)\right)_{i=2}^N$ for $X$ an N-BBM). This has been conjectured in several papers and is widely considered to be true (see e.g. section 8 of \cite{demasi2017hydrodynamics}) - and in fact the method of proof of Theorem \ref{subcritical_stationarity} should allow for the proof of a similar result for N-BBM viewed from the leftmost particle - though this has not been investigated in rigourous detail due to space and time constraints.

\appendix
\chapter{Appendix}
This appendix contains several technical lemmas which are stated here as providing the proofs or statements of them in the section above would detract from the ideas behind the relevant proofs.
\begin{lemma}{Relabelling Lemma}
\label{relabelling_lemma}

Let $X_1, \ldots X_N$ be i.i.d random variables independent of a sigma algebra $\mathcal{G}$. Then if $\rho: \Omega \rightarrow S(N)$ is a random permutation of $\{1, \ldots, N\}$ and $\rho$ is $\mathcal{G}$ measurable, Then $X_{\rho(1)}, \ldots X_{\rho(N)}$ are i.i.d random variables independent of $\mathcal{G}$.

\begin{proof}
$$\mathbb{P}\left(X_{\rho(1)} \leq x_1, \ldots X_{\rho(N)} \leq x_n \right | \:\mathcal{G})$$
$$= \sum_{\sigma \in S(N)} \left( \mathbb{E}\left[ \mathbbm{1}_{X_{\sigma(1)} \leq x_1} \ldots \mathbbm{1}_{X_{\sigma(N)} \leq x_n} \mathbbm{1}_{\rho = \sigma} |\: \mathcal{G}\right] \right)$$
$$= \sum_{\sigma \in S(N)} \left( \mathbb{E}\left[ \mathbbm{1}_{X_\sigma(1) \leq x_1} \ldots \mathbbm{1}_{X_{\sigma(N)} \leq x_n} |\: \mathcal{G}\right] \mathbbm{1}_{\rho = \sigma} \right)$$
$$= \sum_{\sigma \in S(N)} \left( \mathbb{P}\left( X_{\sigma(1)} \leq x_1 \right) \ldots \mathbb{P} \left( X_{\sigma(N)} \leq x_n \right) \mathbbm{1}_{\rho = \sigma} \right)$$
$$= \left( \sum_{\sigma \in S(N)}\mathbbm{1}_{\rho = \sigma} \right) \mathbb{P}\left( X_{1} \leq x_1 \right) \ldots \mathbb{P} \left( X_{N} \leq x_n \right) = \mathbb{P}\left( X_{1} \leq x_1 \right) \ldots \mathbb{P} \left( X_{N} \leq x_n \right)$$

Where we are using the fact that $X_n$ is independent of $\mathcal{G}$ to remove the conditioning, and we are using the fact that $X_1, \ldots X_N$ are i.i.d to replace $\mathbb{P}\left( X_{\sigma(n)} \leq x_1 \right)$ with $\mathbb{P}\left( X_n \leq x_1 \right)$
\end{proof}

\end{lemma}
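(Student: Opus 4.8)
The plan is to compute the joint conditional distribution function of $(X_{\rho(1)}, \ldots, X_{\rho(N)})$ given $\mathcal{G}$ and show that it equals a deterministic product of marginals; this single computation yields both the independence from $\mathcal{G}$ and the i.i.d.\ property simultaneously. First I would fix real numbers $x_1, \ldots, x_N$ and exploit the fact that $S(N)$ is \emph{finite}: the events $\{\rho = \sigma\}$, for $\sigma \in S(N)$, form a $\mathcal{G}$-measurable partition of $\Omega$. Since on $\{\rho = \sigma\}$ we have $X_{\rho(i)} = X_{\sigma(i)}$, I can write the conditional CDF $\mathbb{P}(X_{\rho(1)} \leq x_1, \ldots, X_{\rho(N)} \leq x_N \mid \mathcal{G})$ as a finite sum over $\sigma$ of the terms $\mathbb{E}[\,\mathbbm{1}_{X_{\sigma(1)} \leq x_1}\cdots\mathbbm{1}_{X_{\sigma(N)} \leq x_N}\,\mathbbm{1}_{\rho=\sigma} \mid \mathcal{G}\,]$.

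The key manipulation is then to handle each summand. Because $\{\rho = \sigma\} \in \mathcal{G}$, the indicator $\mathbbm{1}_{\rho=\sigma}$ is a ``known'' factor and can be pulled out of the conditional expectation. What remains is $\mathbb{E}[\,\mathbbm{1}_{X_{\sigma(1)} \leq x_1}\cdots\mathbbm{1}_{X_{\sigma(N)} \leq x_N} \mid \mathcal{G}\,]$, and here I would invoke that the \emph{whole family} $(X_1, \ldots, X_N)$ is independent of $\mathcal{G}$, so this collapses to its unconditional value $\mathbb{P}(X_{\sigma(1)} \leq x_1, \ldots, X_{\sigma(N)} \leq x_N)$. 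I then use permutation invariance of the i.i.d.\ law: since the $X_i$ are identically distributed, $\mathbb{P}(X_{\sigma(i)} \leq x_i) = \mathbb{P}(X_i \leq x_i)$ regardless of the value $\sigma(i)$, and by independence the joint probability factorizes to $\prod_{i=1}^N \mathbb{P}(X_i \leq x_i)$, which crucially does not depend on $\sigma$.

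Finally, I would factor this $\sigma$-independent constant out of the sum and use $\sum_{\sigma \in S(N)} \mathbbm{1}_{\rho = \sigma} = 1$ almost surely to conclude that $\mathbb{P}(X_{\rho(1)} \leq x_1, \ldots, X_{\rho(N)} \leq x_N \mid \mathcal{G}) = \prod_{i=1}^N \mathbb{P}(X_i \leq x_i)$ identically. As the right-hand side is deterministic and equals the product of the marginal CDFs, this establishes in one stroke that $(X_{\rho(1)}, \ldots, X_{\rho(N)})$ is independent of $\mathcal{G}$ and that its coordinates are i.i.d.\ with the common law of $X_1$.

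The only genuinely delicate point---and hence the step to treat carefully---is the conditional-expectation bookkeeping: extracting the $\mathcal{G}$-measurable indicator via the ``taking out what is known'' property, and then removing the conditioning on the surviving factor. The latter requires the \emph{joint} independence of $(X_1,\ldots,X_N)$ from $\mathcal{G}$, not merely marginal independence; once that is granted, the rest is a routine use of exchangeability of i.i.d.\ variables and the finiteness of $S(N)$.
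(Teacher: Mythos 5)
Your proposal is correct and follows essentially the same route as the paper's own proof: partition over the finite set of $\mathcal{G}$-measurable events $\{\rho=\sigma\}$, take out the indicator, remove the conditioning by joint independence of $(X_1,\ldots,X_N)$ from $\mathcal{G}$, and use exchangeability so the resulting product of marginals is independent of $\sigma$. Your explicit remark that \emph{joint} (not merely marginal) independence from $\mathcal{G}$ is what licenses dropping the conditioning is a point the paper uses implicitly but does not spell out.
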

\begin{lemma}
\label{stopping_time_expectation}
Let $\tau$ be a stopping time of some filtration $\left(\mathcal{G}_n\right)_{n=1}^{\infty}$. Then suppose that there is some $\alpha \in \mathbb{N}$ and $\epsilon > 0$ such that 

$$\mathbb{P}(\tau \leq n + \alpha \; | \; \mathcal{G}_n) \geq \epsilon$$
Then it holds that 
$$\mathbb{P}(\tau > m \alpha) \leq (1- \epsilon)^m$$
And in particular that $\mathbb{E}(\tau) < \infty$

\end{lemma}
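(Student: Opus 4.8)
The plan is to establish the geometric tail bound $\mathbb{P}(\tau > m\alpha) \leq (1-\epsilon)^m$ by induction on $m$, and then recover the finiteness of the expectation by summing the tail. The crucial structural fact I would exploit at the outset is that, because $\tau$ is a stopping time for $(\mathcal{G}_n)$, the event $\{\tau > m\alpha\}$ is $\mathcal{G}_{m\alpha}$-measurable (its complement $\{\tau \leq m\alpha\}$ being in $\mathcal{G}_{m\alpha}$ by definition of a stopping time). This is precisely what lets me condition on $\mathcal{G}_{m\alpha}$ and treat the indicator $\mathbbm{1}_{\tau > m\alpha}$ as a factor that may be pulled through a conditional expectation.

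For the inductive step I would specialise the hypothesis to $n = m\alpha$, which reads $\mathbb{P}(\tau \leq (m+1)\alpha \mid \mathcal{G}_{m\alpha}) \geq \epsilon$, and therefore $\mathbb{P}(\tau > (m+1)\alpha \mid \mathcal{G}_{m\alpha}) \leq 1 - \epsilon$. Since $\{\tau > (m+1)\alpha\} \subseteq \{\tau > m\alpha\}$, I would write
$$\mathbb{P}(\tau > (m+1)\alpha) = \mathbb{E}\left[\mathbbm{1}_{\tau > m\alpha}\,\mathbb{E}\left[\mathbbm{1}_{\tau > (m+1)\alpha} \mid \mathcal{G}_{m\alpha}\right]\right] \leq (1-\epsilon)\,\mathbb{P}(\tau > m\alpha),$$
using the tower property and pulling the $\mathcal{G}_{m\alpha}$-measurable indicator $\mathbbm{1}_{\tau > m\alpha}$ inside the conditional expectation. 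With the trivial base case $\mathbb{P}(\tau > 0) \leq 1 = (1-\epsilon)^0$, induction then delivers $\mathbb{P}(\tau > m\alpha) \leq (1-\epsilon)^m$ for all $m \geq 0$.

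Finally, to deduce $\mathbb{E}[\tau] < \infty$ I would use the layer-cake identity $\mathbb{E}[\tau] = \sum_{k \geq 0} \mathbb{P}(\tau > k)$ together with the monotonicity of $k \mapsto \mathbb{P}(\tau > k)$: for each block $m\alpha \leq k < (m+1)\alpha$ I bound all $\alpha$ of the terms by $\mathbb{P}(\tau > m\alpha) \leq (1-\epsilon)^m$, giving $\mathbb{E}[\tau] \leq \alpha \sum_{m \geq 0}(1-\epsilon)^m = \alpha/\epsilon < \infty$.

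I do not anticipate a genuine obstacle, as this is a standard geometric-tail argument of Borel–Cantelli flavour. The one point that repays care is the measurability bookkeeping in the inductive step: one must confirm that $\{\tau > m\alpha\} \in \mathcal{G}_{m\alpha}$ so that its indicator may legitimately be taken outside the conditional expectation, and one must use the nesting $\{\tau > (m+1)\alpha\} \subseteq \{\tau > m\alpha\}$ to insert the extra indicator before conditioning. Everything beyond this is a routine application of the tower property and the summation of a geometric series.
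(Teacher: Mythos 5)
Your proof is correct, and it is worth noting how it relates to the paper: the paper does not actually prove this lemma at all, but simply cites it as Exercise E10.5 of Williams, \emph{Probability with Martingales}. Your argument---using that $\{\tau > m\alpha\} \in \mathcal{G}_{m\alpha}$ because $\tau$ is a stopping time, inserting the indicator via the nesting $\{\tau > (m+1)\alpha\} \subseteq \{\tau > m\alpha\}$, applying the tower property, and bounding the inner conditional probability by $1-\epsilon$ using the hypothesis at $n = m\alpha$---is precisely the intended solution to that exercise, so in substance you have supplied the standard proof that the paper outsources; what your write-up buys is self-containedness, which is arguably preferable in a dissertation that leans on this lemma repeatedly. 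Two minor points of bookkeeping. First, to obtain the bound exactly in the form $(1-\epsilon)^m$, your inductive step at $m = 0$ invokes the hypothesis at $n = 0$, i.e.\ requires a $\mathcal{G}_0$ (or the unconditional bound $\mathbb{P}(\tau \leq \alpha) \geq \epsilon$); since the paper's filtration is indexed from $n = 1$, one can instead start the induction at $m = 1$ and obtain $\mathbb{P}(\tau > m\alpha) \leq (1-\epsilon)^{m-1}$, which is equally sufficient for $\mathbb{E}[\tau] < \infty$. Second, the layer-cake formula $\mathbb{E}[\tau] = \sum_{k \geq 0} \mathbb{P}(\tau > k)$ is an identity only because $\tau$ here is integer-valued (a stopping time of a discrete filtration); since the paper in fact applies the lemma to continuous-time hitting times such as $T_1^\epsilon$, it is worth observing that your argument survives unchanged in that setting, because monotonicity of $t \mapsto \mathbb{P}(\tau > t)$ gives $\mathbb{E}[\tau] = \int_0^\infty \mathbb{P}(\tau > t)\,dt \leq \sum_{k \geq 0} \mathbb{P}(\tau > k) \leq \alpha/\epsilon$, which is the inequality you actually need.
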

\begin{proof}
This is given as exercise E10.5 in \cite{williams_1991}
\end{proof}

\begin{definition}[Free Branching Brownian Motion]
\label{BBM_definition}
A free branching Brownian motion is heuristically a system in which each particle moves independently according a Brownian motion, and at rate $1$, it duplicates into two particles. We write $\mathcal{N}(t)$ for the number of particles alive at time $t$, and $X_u(t), u \in \mathcal{N}(t)$ for the positions of the particles. See \cite{BovierAnton2017Gpot} for a more precise definition and discussion.
\end{definition}






\begin{lemma}[Many-to-one Lemma]
Let $(X_u(t), u \in \mathcal{N}(t))$ be a Branching Brownian Motion process started at $0$. Let $F: C_{[0, T]} \rightarrow \mathbb{R}$ be any measurable function. Then

$$\mathbb{E}\left[ \sum_{u \in \mathcal{N}(t)} F(X_u(s), 0 \leq s \leq t) \right]  = e^t \mathbb{E} \left[ F(B_s, 0 \leq s \leq t)\right]$$
\end{lemma}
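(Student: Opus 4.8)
The plan is to prove the Many-to-one Lemma by induction on the number of branching events, using the branching structure of the process together with the standard fact that the branching times form a Poisson process of rate $1$. The core idea is that the expected sum over all particles can be decomposed according to the first branching event, and the factor $e^t$ will emerge as the expected number of particles alive at time $t$, which is well known to be $e^t$ for a rate-$1$ binary branching Brownian motion.

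First I would condition on the time $\sigma$ of the first branching event, which is an exponential random variable of rate $1$. On the event $\{\sigma > t\}$ there is a single particle throughout $[0,t]$, moving as a Brownian motion $B$, and the sum on the left reduces to $F(B_s, 0 \le s \le t)$ with probability $e^{-t}$. On the event $\{\sigma = r \le t\}$, the particle moves as a Brownian motion up to time $r$, then splits into two independent copies, each of which spawns an independent branching Brownian motion for the remaining time $t - r$ started from the position $B_r$. Writing $G(t) := \mathbb{E}[\sum_{u \in \mathcal{N}(t)} F(X_u(s), 0 \le s \le t)]$ and using the independence of the two subtrees together with the Markov property of Brownian motion, I would obtain a renewal-type integral equation of the form
$$
G(t) = e^{-t}\,\mathbb{E}[F(B_s, 0 \le s \le t)] + \int_0^t e^{-r}\cdot 2\,\mathbb{E}\bigl[\,g_r(B_s, 0 \le s \le r)\,\bigr]\,dr,
$$
where $g_r$ encodes the contribution of one of the two offspring subtrees over $[r,t]$, reattached to the path on $[0,r]$. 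The combinatorial factor $2$ records the two children produced at the branch point.

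The cleanest way to close the argument is to verify directly that the claimed answer $H(t) := e^{t}\,\mathbb{E}[F(B_s, 0 \le s \le t)]$ satisfies this same integral equation, and then invoke uniqueness of its solution (the equation is a linear Volterra equation with a bounded kernel, so its solution is unique for bounded or suitably integrable $F$, which I would assume, reducing the general case by truncation and monotone convergence). Substituting $H$ into the right-hand side and using the Markov property of $B$ to split $\mathbb{E}[F(B_s, 0\le s \le t)]$ across the branch time $r$, the integral $\int_0^t e^{-r}\cdot 2 \cdot e^{-(t-r)} \cdots$ should collapse, after combining with the prefactor $e^{-t}$, to exactly $e^{t}\,\mathbb{E}[F]$; here the $2$ from the offspring count cancels against a factor $\tfrac{1}{2}$ coming from the fact that either child continues the distinguished lineage with equal weight under the size-biasing implicit in the many-to-one change of measure.

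The main obstacle will be handling the path-dependence of $F$ correctly: because $F$ acts on the whole trajectory $(X_u(s), 0 \le s \le t)$ of each particle, I must carefully reconstruct the ancestral path of a descendant particle by concatenating the pre-branch path on $[0,r]$ with the post-branch evolution on $[r,t]$, and check that under the expectation this concatenation reassembles into a single Brownian path on $[0,t]$ by the Markov property. Keeping the bookkeeping of which Brownian increments are shared between the ancestral path and the many-to-one reference process is the delicate step; once that is set up correctly, the factor $e^t$ and the cancellation of the branching factor $2$ follow mechanically. I would assume $F$ bounded and measurable throughout, noting that the general measurable case follows by standard approximation.
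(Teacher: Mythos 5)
The paper itself does not prove this lemma; it simply cites the literature, so your attempt is measured against the standard proofs rather than an in-paper argument. Your overall strategy (decompose at the first branching time, verify that the claimed answer solves the resulting renewal identity, conclude by uniqueness) is a legitimate and classical route, and the verification computation does come out: substituting $H_F(t) := e^{t}\,\mathbb{E}[F(B_s, 0\le s\le t)]$ into the right-hand side and using the Markov property of Brownian motion to reassemble the path on $[0,r]$ with the post-branch path into a single Brownian path on $[0,t]$,
$$e^{-t}\,\mathbb{E}[F(B)] + 2\int_0^t e^{-r}\, e^{t-r}\,\mathbb{E}[F(B)]\,dr \;=\; \mathbb{E}[F(B)]\left(e^{-t} + 2e^{t}\cdot\frac{1-e^{-2t}}{2}\right) \;=\; e^{t}\,\mathbb{E}[F(B)].$$
Note that in this computation the factor $2$ is absorbed by $\int_0^t e^{-2r}\,dr = (1-e^{-2t})/2$; your remark that it ``cancels against a factor $\tfrac12$ coming from size-biasing'' imports language from the spine/change-of-measure proof, which plays no role in your argument and should be removed, as it signals a conflation of two different proofs.

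The genuine gap is the uniqueness step. As written, your integral equation is \emph{not} a closed equation in the single unknown scalar function $G$: the quantity $g_r$ is the same type of object evaluated at the path-shifted functional $F_w(\xi) := F(w\oplus\xi)$ with $w=(B_s)_{s\le r}$, so the identity couples an entire family of unknowns $\{G_{F_w}(t-r)\}$ indexed by initial path segments. ``Uniqueness for a linear Volterra equation with bounded kernel'' applies to one unknown function of $t$ and cannot be invoked verbatim; this is precisely the path-dependence obstacle you flag, but it is the mathematical content of the proof rather than bookkeeping. The repair is standard: work with the whole family at once. For $\|F\|_\infty\le M$ every shifted functional satisfies $\|F_w\|_\infty\le M$, and the a priori bound $|G_{F'}(s)|\le M\,\mathbb{E}[\mathcal{N}(s)] = Me^{s}$ (the case $F\equiv 1$ is genuinely closed, since the constant functional is shift-invariant, and gives the Yule mean $e^{s}$) shows the defects $D_{F'} := G_{F'}-H_{F'}$ are uniformly bounded on $[0,T]$ and satisfy $|D_F(t)|\le 2\int_0^t \sup_{\|F'\|_\infty\le M}|D_{F'}(s)|\,ds$; iterating this inequality Picard-style gives $|D_F(t)|\le C(2t)^n/n!\to 0$, hence $G_F = H_F$. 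Alternatively, you could bypass the equation entirely with a more elementary decomposition: sum over the words of left/right choices along ancestral lines. A word of length $k$ indexes a lineage that is alive at time $t$ precisely when its rate-$1$ branching clock rings exactly $k$ times, an event of probability $e^{-t}t^k/k!$ independent of that lineage's Brownian path, so the left-hand side equals $\sum_{k\ge 0} 2^{k} e^{-t}\frac{t^{k}}{k!}\,\mathbb{E}[F(B)] = e^{t}\,\mathbb{E}[F(B)]$ directly.
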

\begin{proof}
e.g. \cite{many-to-one} Section 4.1
\end{proof}

\begin{lemma}[Radius Bound]
\label{radius_bound}
Let $(X_u(t), u \in \mathcal{N}(t))$ be a Branching Brownian Motion process started at $0$. Let $T$ be a random variable (possibly dependent) such that for some $\lambda$ and $t$ large enough $$\mathbb{P}(T > t) \leq e^{- \lambda t}$$
Then for 
$$R_t := \sup_{s \leq t} \left\{\inf_{R > 0}\left\{  R : \; |X_u(t)| \leq R, \forall u \in \mathcal{N}(t)\right\} \right\}$$
We have that $\exists c > 0$ so that for $x$ large enough
$$\mathbb{P}(R_T \geq x) \leq e^{-c \sqrt{x}} $$
\end{lemma}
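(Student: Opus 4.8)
The plan is to first reduce the statement about the random, possibly process-dependent, time $T$ to a statement about a \emph{deterministic} time, and then to control the radius at a deterministic time by a first-moment (many-to-one) computation. The key observation making the reduction clean is that $t \mapsto R_t$ is non-decreasing: for any deterministic $t_0 > 0$, on the event $\{T \le t_0\}$ we have $R_T \le R_{t_0}$, so that
\begin{equation}
\mathbb{P}(R_T \ge x) \le \mathbb{P}(T > t_0) + \mathbb{P}(R_{t_0} \ge x).
\end{equation}
Crucially this inequality holds \emph{regardless} of any dependence between $T$ and the branching Brownian motion, since the cutoff $t_0$ is deterministic; this is precisely the place where the ``possibly dependent'' hypothesis would otherwise cause trouble. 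The first term is immediately controlled by the hypothesis $\mathbb{P}(T > t_0) \le e^{-\lambda t_0}$ for $t_0$ large.

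It remains to bound $\mathbb{P}(R_{t_0} \ge x)$ for a fixed time $t_0$. Writing $M^+_{t_0} := \sup_{s \le t_0} \sup_{u \in \mathcal{N}(s)} X_u(s)$ for the one-sided running maximum, I would count the particles alive at time $t_0$ whose ancestral trajectory has exceeded level $x$. Applying the Many-to-one Lemma with $F(w) = \mathbbm{1}[\sup_{s \le t_0} w_s \ge x]$ gives
\begin{equation}
\mathbb{E}\left[ \# \{ u \in \mathcal{N}(t_0) : \sup_{s \le t_0} X_u(s) \ge x \} \right] = e^{t_0}\, \mathbb{P}\left( \sup_{s \le t_0} B_s \ge x \right).
\end{equation}
Since in a free branching Brownian motion particles never die, any trajectory reaching level $x$ before time $t_0$ has at least one descendant alive at time $t_0$; hence $\{M^+_{t_0} \ge x\}$ is contained in the event that the count above is at least one, and Markov's inequality together with the reflection principle and the standard Gaussian tail $\mathbb{P}(B_{t_0} \ge x) \le e^{-x^2/(2t_0)}$ yields $\mathbb{P}(M^+_{t_0} \ge x) \le 2 e^{t_0} e^{-x^2/(2t_0)}$. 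By the symmetry $B \leftrightarrow -B$ the same bound holds for the lower excursion, so $\mathbb{P}(R_{t_0} \ge x) \le 4 e^{t_0} e^{-x^2/(2t_0)}$.

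Combining the two estimates, I would choose the deterministic cutoff to scale as $t_0 = \sqrt{x}$. Then the first term becomes $e^{-\lambda \sqrt{x}}$, while the second becomes $4\exp(\sqrt{x} - x^{3/2}/2)$, which decays faster than any $e^{-c\sqrt{x}}$. Hence for a suitable $c \in (0, \lambda)$ and all $x$ large enough we obtain $\mathbb{P}(R_T \ge x) \le e^{-c\sqrt{x}}$, as required. The only genuinely delicate point is the dependence of $T$ on the process, which the monotonicity-plus-deterministic-cutoff step in the first paragraph dispatches entirely; the remaining ingredients are the routine many-to-one first-moment bound and the (sub-optimal but sufficient) choice $t_0 \propto \sqrt{x}$, which balances the exponential tail of $T$ against the Gaussian decay of the radius to produce the $\sqrt{x}$ rate.
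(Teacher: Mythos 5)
Your proposal is correct and follows essentially the same argument as the paper's proof: monotonicity of $R_t$ plus a deterministic cutoff $t_0 = \sqrt{x}$ to dispatch the dependence of $T$, then the many-to-one lemma and the Gaussian running-maximum tail to bound $\mathbb{P}(R_{t_0} \geq x)$. Your explicit remark that particles never die (so a path exceeding level $x$ has a descendant alive at time $t_0$) is a small point the paper leaves implicit, but otherwise the two proofs coincide.
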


\begin{proof}
We argue first by the many-to-one lemma. 
Let $F(X_s, s \leq t) = \mathbbm{1}_{\sup_{s \leq t} |X_s| \geq x}$ for $x \in \mathbb{R}$
Then,
$$\mathbb{P}(R_t \geq x)  = \mathbb{P}\left(\sup_{u \in \mathcal{N}(t)} \sup_{s \leq t} |X_u(s)| \geq x\right)$$
$$\leq \mathbb{E} \left[ \sum_{u \in \mathcal{N}(t)} \mathbbm{1}_{\sup_{s \leq t} X_u(s) \geq x} \right] = e^t \mathbb{P}\left(\sup_{s \leq t} |B_s| \geq x \right)$$

Then for each $x \in \mathbb{R}$ we choose a deterministic $t_x$ so that $\mathbb{P}(T > t_x) \leq e^{- \lambda \sqrt{x}}$. So for $x$ large enough we can choose $t_x = \sqrt{x}$ by the bound assumed for $T$.

Then 
$$\mathbb{P}\left(R_T \geq x \right) = \mathbb{P}\left(R_T \geq x \;|\; T > t_x \right)\mathbb{P}(T > t_x) +\mathbb{P}\left(R_T \geq x \;|\; T \leq t_x \right)\mathbb{P}(T \leq t_x)$$
$$\leq \mathbb{P}(T > t_x) + \mathbb{P}(R_{t_x} > x)$$
where here we have used the fact that $R_t$ is non-decreasing in $t$.
$$\leq e^{- \lambda \sqrt{x}} + e^{t_x} \mathbb{P}\left(\sup_{s \leq t_x} |B_s| \geq x \right) \leq  e^{- \lambda \sqrt{x}} + 2e^{\sqrt{x}}e^{\frac{-(\sqrt{x})^3}{2}} = e^{- \lambda \sqrt{x}} + 2e^{-x/2}$$
Using the standard bound that $\mathbb{P}(\sup_{s \leq t} B_s \geq \lambda t) \leq e^{\frac{- \lambda^2 t}{2}}$, and the fact that $-B_t$ is a Brownian motion. Hence we can take $c$ so that for $x$ large enough $\mathbb{P}(R_T \geq x) \leq e^{-c\sqrt{x}}$

\end{proof}






\begin{center}
     {\large \textbf{Acknowledgements}}
\end{center}

{\large
With thanks to Julien Berestycki for supervising me during this dissertation}

\addcontentsline{toc}{chapter}{Bibliography}
\bibliography{main}        
\bibliographystyle{plain}  

\end{document}